\documentclass[makeidx]{article}
\usepackage[latin1]{inputenc}
\title{Lower bounds for the first eigenvalue of the Laplacian with zero magnetic field in planar domains}
\author{Bruno Colbois and Alessandro Savo}
\date{}
\pdfoutput=1
\usepackage{makeidx}
\usepackage{amssymb , amsmath, amsthm,graphicx, float}
\usepackage{graphicx}
\usepackage{xcolor}
\oddsidemargin 0cm
\evensidemargin 0cm
\parindent 0cm
\textwidth 18cm
\newtheorem{defi}{Definition}
\newtheorem{theorem}[defi]{Theorem}

\newtheorem{prop}[defi]{Proposition}
\newtheorem{lemme}[defi]{Lemma}

\newcommand{\eps}{\epsilon}
 \newcommand{\twosystem}[2]{\left\{\begin{aligned} &#1\\ &#2\end{aligned}\right.}
\newcommand{\threesystem}[3]{\left\{ \begin{aligned}&#1\\ &#2\\&#3\end{aligned}\right.}

\newcommand{\nero}{\smallskip$\bullet\quad$\rm}

\newcommand{\parte}[1]{\smallskip\noindent {\rm#1)}\,\,}

\newcommand{\scal}[2]{\langle{#1},{#2}\rangle}

\newcommand{\abs}[1]{\lvert{#1}\rvert}

\newcommand{\reals}{{\bf R}}

\newcommand{\sphere}[1]{{\bf S}^{#1}}
\newcommand{\real}[1]{{\bf R}^{#1}}

\newcommand{\bd}{\partial}

\newcommand{\matrice}{\begin{pmatrix}}
\newcommand{\ok}{\end{pmatrix}}

\begin{document}

\parindent 0cm

\maketitle
\abstract{ We study the Laplacian with zero magnetic field acting on complex functions of a planar domain $\Omega$, with magnetic Neumann boundary conditions. If $\Omega$ is simply connected then the spectrum reduces to the spectrum of the usual Neumann Laplacian; therefore we focus on multiply connected domains bounded by convex curves and prove lower bounds for its ground state depending on the geometry and the topology of $\Omega$.  Besides the area, the perimeter and the diameter, the geometric invariants which play a crucial role in the estimates are the  the fluxes of the potential one-form around the inner holes and  the distance between the boundary components of the domain; more precisely, the ratio between its minimal and maximal width. Then, we give a lower bound for doubly connected domains which is sharp in terms of this ratio, and a general lower bound for domains with an arbitrary number of holes. When the inner holes shrink to  points, we obtain as a corollary a lower bound for the first eigenvalue of the so-called Aharonov-Bohm operators with an arbitrary number of poles.} 

\bigskip

\noindent {\it Classification AMS $2000$}: 58J50, 35P15\newline
{\it Keywords}: Magnetic Laplacian, spectrum, lowest eigenvalue, planar domains \newline
{\it Acknowledgments:} Research partially supported by INDAM and GNSAGA of Italy

\large
 
\noindent
\parindent 0cm

\section{Introduction}

\subsection{Definitions and state of the art}  Let $\Omega$ be a bounded, open, connected domain  with smooth boundary $\bd\Omega$ in a Riemannian manifold $(M,g)$ and let $A$ be a smooth real one-form on $\Omega$, (the {\it potential} one-form). 
Define a connection $\nabla^A$ on the space of complex-valued functions $C^{\infty}(\Omega,{\bf C})$ as follows:
$$
\nabla^A_Xu=\nabla_Xu-iA(X)u,
$$
for all vector fields $X$ on $\Omega$, where $\nabla$ is the Levi-Civita connection of $M$. The {\it magnetic Laplacian with potential $A$} is the operator acting on $C^{\infty}(\Omega,{\bf C})$:
$$
\Delta_A=(\nabla^A)^{\star}\nabla^A.
$$
In $\real n$ this gives explicitly, in the usual notation:
$$
\Delta_A=(i\nabla+A^{\sharp})^2,
$$
where $A^{\sharp}$ is the dual vector field of $A$, the {\it vector potential}.  The two-form $B=dA$ is the magnetic field; dually, in dimension $2$, $B$ is the vector field $B={\rm curl}A^{\sharp}$.

\smallskip

Scope of this paper is to discuss the spectrum of $\Delta_A$ for planar domains. Hence in what follows we take $\Omega\subset\real 2$.

\smallskip

The spectrum of the magnetic Laplacian has been studied extensively for Dirichlet boundary conditions ($u=0$ on $\bd\Omega$), and we denote by $\lambda_1^D(\Omega,A)$ the first eigenvalue. First we remark that, thanks to the diamagnetic inequality, one always has:
$$
\lambda_1^D(\Omega,A)\geq  \lambda_1^D(\Omega,0),
$$
and in particular $\lambda_1^D(\Omega,A)>0$. For planar domains and constant magnetic field (that is, $dA=B$ and $\abs{B}$ constant), a Faber-Krahn inequality holds, in the sense that the first eigenvalue of a planar domain is minimized by that of the disk of the same area (see \cite{Er1}). Estimates for sums of eigenvalues can be found in \cite{LS}.

\smallskip

However in this paper we deal with
{\it magnetic Neumann boundary conditions}, that is we impose
$
\nabla^A_Nu=0,
$
on the boundary, where $N$ is the inner unit normal to $\bd\Omega$. It is known that then $\Delta_A$ admits a discrete spectrum
$$
0\leq \lambda_1^N(\Omega,A)\leq\lambda_2^N(\Omega,A)\leq\dots
$$
diverging to $+\infty$. The first eigenvalue has the following variational characterization:

\begin{equation}
\lambda_{1}^N(\Omega,A)=\min_{u\in H_1(\Omega)\setminus\{0\}}\frac{\int_{\Omega}\abs{\nabla^Au}^2dx}{\int_{\Omega}\vert u\vert^2dx}.
\end{equation}

For computing lower bounds the diamagnetic inequality is of no use; in fact it gives:
$$
\lambda_1^N(\Omega,A)\geq\lambda_1^N(\Omega,0)=0,
$$
because $\lambda_1^N(\Omega,0)$ is simply the first eigenvalue of the usual Neumann Laplacian, which is zero (the associated eigenspace being spanned by the constant functions). 
There are fewer estimates in this regard; let us first discuss the case of a constant magnetic field $\abs{B}=B_0>0$ on planar domains. The paper \cite{EKP} gives a lower bound of $\lambda_1^N(\Omega,A)$  in terms of the inradius of $\Omega$, $\lambda_1^N(\Omega,0)$ and of course $B_0$. Asymptotic expansions as $\abs{B}\to\infty$  are obtained in 
\cite{FH3}.
We also mention the paper \cite{FH2} which investigates the validity of a reverse Faber-Krahn inequality for constant magnetic field $B_0$, that is: is it true that $\lambda_1^N(\Omega,A)$ is always bounded above by that of a disk with equal volume ? It is proved there that this inequality  is true when $B_0$ is either sufficiently small or sufficiently large, but the general case is still open in the simply connected case.

\nero In this paper we prove three lower bounds for the first eigenvalue of planar domains under Neumann conditions, when the magnetic field is identically zero. 
{\it Since this will be the only boundary condition we consider, from now on we  will simply write $\lambda_1(\Omega,A)$ instead of $\lambda_1^N(\Omega,A)$.}

\smallskip

Let us first clarify the circumstances under which the first eigenvalue might be positive even if the magnetic potential is a closed one-form on $\Omega$. This is intimately related to a phenomenon in quantum mechanics predicted in 1959 and known as {\it Aharonov-Bohm effect}, which has also experimental evidence: a particle travelling a region in the plane might be affected by the magnetic field even if this is identically zero on its path.  In fact what the particle "feels" is not the magnetic field but, rather,  the magnetic potential $A$, provided that $A$ is closed but not exact, and that the flux of $A$ around the pole  may assume non-integer values (see below for the precise condition).  

\smallskip 

Let us be more precise. From the definition we see that, if $A=0$, the spectrum of $\Delta_A$ coincides with the spectrum of the usual Laplacian under Neumann boundary conditions. The same is true when $A=df$ is an exact one-form, by the well-known {\it gauge invariance} of the magnetic Laplacian. This  fundamental  property states that the spectrum of $\Delta_{A+df}$ is the same as the spectrum of $\Delta_A$, for any $f\in C^{\infty}(\Omega)$, which follows from the identity:
$$
\Delta_Ae^{-if}=e^{-if}\Delta_{A+df}
$$
showing that $\Delta_A$ and $\Delta_{A+df}$ are unitarily equivalent. 

\smallskip

On the other hand, if the magnetic field $B=dA$ is non-zero, then $\lambda_1(\Omega,A)$ is strictly positive. One could then ask if $\lambda_1(\Omega,A)$ has to vanish whenever the magnetic field is zero, that is, whenever $A$ is a {\it closed} one-form. 

To that end, let $c$ be a closed curve in $\Omega$ (a loop). The quantity:
$$
\Phi^A_c=\dfrac{1}{2\pi}\oint_cA
$$
is called the {\it flux of $A$ across $c$} (we assume that $c$ is travelled once, and we will not specify the orientation of the loop; this will not affect any of the statements, definitions or results which we will prove in this paper). 

It turns out that 

\nero $\lambda_1(\Omega,A)=0$ if and only if $A$ is closed and the cohomology class of $A$ is an integer, that is, the flux of $A$ around any loop is an integer. 

\medskip

This was first observed by Shigekawa \cite{Sh} for closed manifolds, and then proved in \cite{HHHO} for manifolds with boundary. This remarkable feature of the magnetic Laplacian shows its deep relation with the topology of the underlying manifold $\Omega$. In this paper we will focus precisely on the situation where the potential one form is {\it closed}, and we will then give two lower bounds for the first eigenvalue $\lambda_1(\Omega,A)$. 

\smallskip

Let us then recall a few previous  results when the magnetic field is assumed to vanish.
A lower bound for a general Riemannian cylinder (i.e. the surface $\sphere 1\times (0,L)$ endowed with a Riemannian metric) and zero magnetic field has been given in \cite{CS1}, and is somewhat the inspiration of this work: one of two main results here is in fact to improve such bound  when $\Omega$ is a doubly connected planar domain.

\smallskip

Directly related to the Aharonov-Bohm effect, we mention the papers \cite{AFNN} and \cite{NNT} which investigate the behavior of the spectrum  of a domain with a pole $\Omega\setminus\{a\}$ 
when the pole $a$ approaches the boundary, for Dirichlet boundary conditions. We remark here that the pole is a distinguished point $a=(a_1,a_2)$ and the potential is the harmonic one-form:
$$
A_a=\frac 12\Big(-\dfrac{x_2-a_2}{(x_1-a_1)^2+(x_2-a_2)^2}dx_1+
\dfrac{x_1-a_1}{(x_1-a_1)^2+(x_2-a_2)^2}dx_2\Big)
$$
which has flux $\frac 12$ across any closed curve enclosing $a$, giving rise to a magnetic field which is a Dirac distribution concentrated at the pole $a$ (therefore, the magnetic field indeed vanishes on $\Omega\setminus\{a\}$).  The magnetic Laplacian $\Delta_{A_a}$ acting on $\Omega\setminus\{a\}$ is often called an  Aharonov-Bohm operator. One could think to a domain with a pole as a doubly connected domain for which the inner boundary curve shrinks to a point. 

\smallskip

 We will in fact give a lower bound for the first eigenvalue of Aharonov-Bohm operators with many poles, and Neumann boundary conditions (see Theorem \ref{punctured}).

\smallskip

 The Aharonov-Bohm operators play an interesting role in the study of minimal partitions, see chapter 8 of \cite{BH}. 
 
For Neumann boundary conditions, we mention the paper \cite{HHHO}, where the authors study the multiplicity and the nodal sets corresponding to the ground state  for non-simply connected planar domains with harmonic potential.
For doubly connected domains, it is shown that $\lambda_1(\Omega,A)$ is maximal precisely when $\Phi^A$ is congruent to $\frac 12$ modulo integers (this fact is no longer true when there are more than two holes). The proof relies on a delicate argument involving the nodal line of a first eigenfunction and the conclusion does not follow from a specific comparison argument, or from an explicit lower bound.

The focus of this paper is on lower bounds for multiply connected planar domains and zero magnetic field defined by the closed potential form $A$.  By what we have just said, it is clear that estimating the first eigenvalue  is a trivial problem if $\Omega$ is simply connected, because then any closed one-form is automatically exact, and therefore $\lambda_1(\Omega,A)=0$ by gauge invariance.  Therefore, we restrict our study to domains with $n$ holes, with $n\geq 1$. 

\smallskip

In this paper we will prove: an improved lower bound for doubly connected domains; a general lower bound for  multiply connected domains with an arbitrary number of convex holes; a lower bound for a general convex domain with an arbitrary number of punctures. Let us describe these results in detail.

\subsection{A lower bound for doubly connected domains}

Let us start from doubly connected domains ($n=1$) hence domains of type:
$$
\Omega=F\setminus\bar G,
$$
where $F$ and $G$ are open and smooth. We assume $F$ and $G$ {\it convex}. Let $\Phi^A$ be the flux of the closed potential $A$ around the inner boundary curve $\bd G$: by Shigekawa result, the lower bound is simply zero when $\Phi^A$ is an integer. Then, to hope for a positive lower bound, we need to measure how much $\Phi^A$ is far from being an integer, and the natural invariant will then be:
$$
d(\Phi^A,{\bf Z})=\min\{\abs{\Phi^A-k}: k\in{\bf Z}\}.
$$
The second important ingredient for our lower bounds is the ratio $\frac{\beta}{B}$ between the minimal width and the maximal width of $\Omega$.  To be more precise,  let us say that the line segment $\sigma\subset\Omega$ is an {\it orthogonal ray} if it hits the inner boundary $\bd G$ orthogonally. By definition, the {\it minimal width} $\beta$ (resp. {\it maximal width} $B$) of $\Omega$ is the minimal (resp. maximal ) length of an orthogonal ray contained in $\Omega$:

\begin{figure}[H]                                                                                                  
\begin{center}
 \includegraphics[width=8cm]{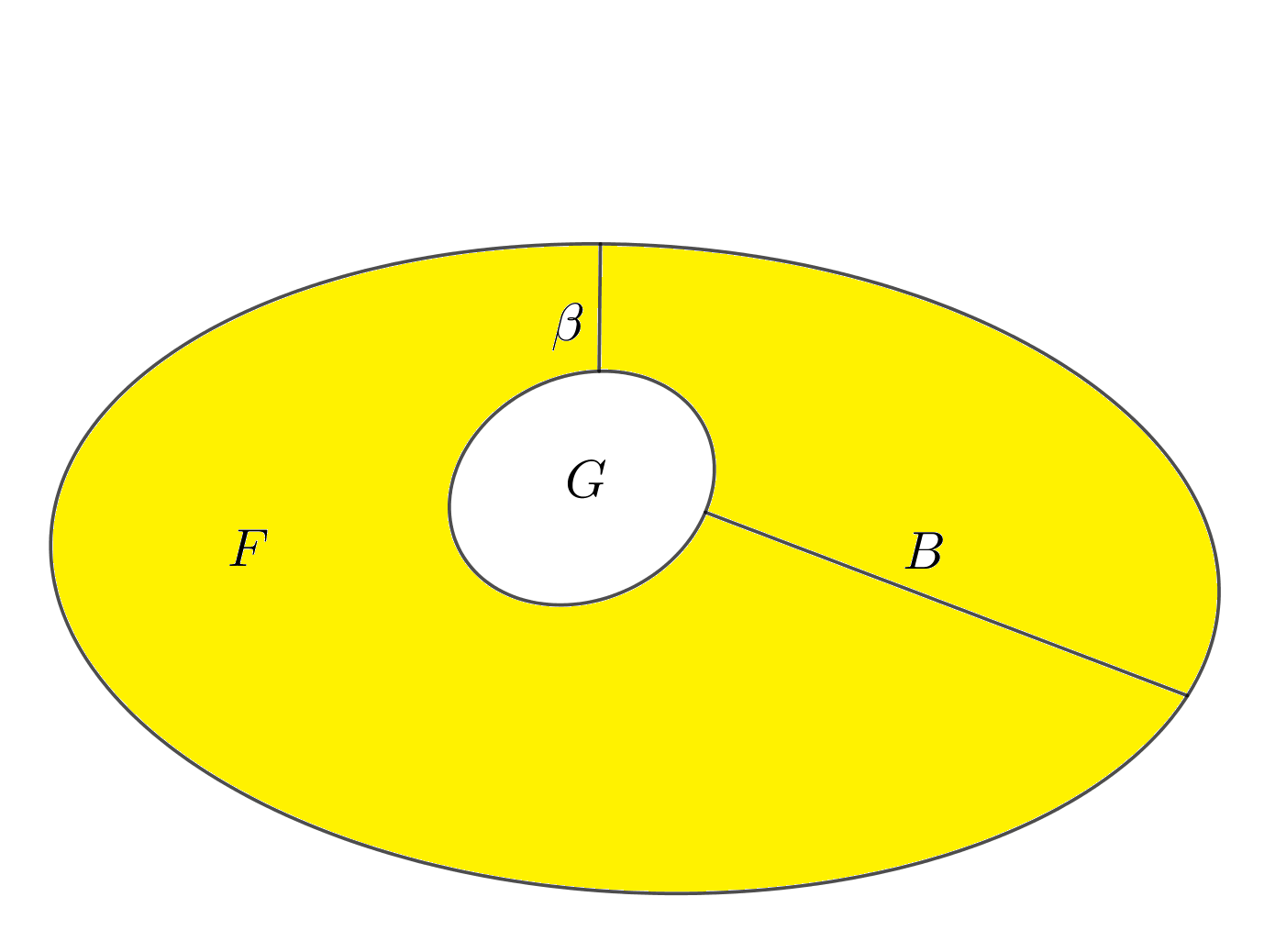}
\caption{The minimal width $\beta$ and the maximal width $B$ of an annulus $\Omega=F\setminus\bar G$}
\end{center}
\end{figure}

Note that the ratio $\frac{\beta}{B}$ is invariant under homotheties, and reaches its largest value $1$ whenever the boundary components are parallel curves.

In Theorem 2 of \cite{CS1} we prove the lower bound:
\begin{equation}\label{JFA}
\lambda_1(\Omega,A)\geq\dfrac{4\pi^2}{\abs{\bd F}^2}\dfrac{\beta(\Omega)^2}{B(\Omega)^2}d(\Phi^A,{\bf Z})^2.
\end{equation}
 We insist on the fact that if $\frac{\beta}B$ is bounded below away from zero we get a positive, uniform lower bound even if $\beta$ tends to zero. Think for example to a concentric annulus $\Omega$ of radii $1$ and $1+\beta$; then $\frac{\beta}{B}=1$ and as $\beta\to 0$ the lower bound will approach $4\pi^2d(\Phi^A,{\bf Z})^2$, a positive number, which is just the first eigenvalue of the unit circle.

\smallskip

This means that (for fixed perimeter) in order to get $\lambda_1$ small, the ratio $\frac{\beta}{B}$ (and not just $\beta$) has  to be small. 

\smallskip

{\bf Sharpness in terms of $\frac{\beta}B$.} In \cite{CS1} we showed that if $\frac{\beta}B$ is small then the first eigenvalue could indeed be small. 
We then looked for an example which could show that the dependance on $\frac{\beta^2}{B^2}$ is sharp, and we could not find it. Rather, in Examples 14 and 15 in \cite{CS1}, we constructed examples of domains such that $B$ is bounded below, say by $1$, $\abs{\bd F}$ is bounded above, $\beta$ goes to zero and $\lambda_1(\Omega,A)$ goes to zero proportionally to $\beta$, for any non-integral flux. Therefore, if one could replace $\frac{\beta^2}{B^2}$ by the linear factor $\frac{\beta}{B}$ in \eqref{JFA}, one would obtain a sharp inequality (with respect to $\frac{\beta}{B}$). See Figure \ref{sharpness} below for the example which shows sharpness. 

\smallskip

This is in fact possible, and the theorem which follows should be regarded as the first main theorem of this paper. 

\begin{theorem}\label{leading} Let $\Omega=F\setminus \bar G$ be an annulus in the plane, with $F$ and $G$ convex with piecewise-smooth boundary. Let $A$ be a closed one-form with flux $\Phi^A$ around the inner hole $G$. Then:

\parte a One has the lower bound:
$$
\lambda_1(\Omega,A)\geq \dfrac{\pi^2}{8}\cdot\dfrac{\abs{F}^2}{\abs{\bd F}^2D(F)^4}\cdot \frac{\beta(\Omega)}{B(\Omega)}\cdot d(\Phi^A,{\bf Z})^2.
$$
where $\beta(\Omega)$ and $B(\Omega)$ are, respectively, the minimum and maximum width of $\Omega$, and $D(F)$ is the diameter of $F$.

\parte b If the outer boundary $\bd F$ is smooth, and if $\beta(\Omega)$ is less than the injectivity radius of the normal exponential map of $\bd F$, then we have the simpler lower bound:
\begin{equation}\label{simple}
\lambda_1(\Omega,A)\geq \dfrac{\pi^2}{\abs{\bd F}^2}\frac{\beta(\Omega)}{B(\Omega)} d(\Phi^A,{\bf Z})^2,
\end{equation}
\end{theorem}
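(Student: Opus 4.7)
My plan combines two one-dimensional inequalities: a magnetic Poincar\'e inequality on each parallel curve to the outer boundary, and a diamagnetic transverse inequality across orthogonal rays emanating from the inner hole. I sketch the argument for part (b); part (a) will follow the same skeleton with rougher geometric substitutes for the tubular neighborhood. By the gauge invariance of $\Delta_A$ I may assume $\abs{\Phi^A}\leq\tfrac12$, hence $d(\Phi^A,{\bf Z})=\abs{\Phi^A}$. The injectivity radius hypothesis lets me introduce Fermi coordinates $(s,t)$, with $s$ arc length on $\bd F$ and $t\in[0,\beta(\Omega)]$ the inward distance, on the tubular neighborhood $V=\{x\in\Omega:\dist(x,\bd F)<\beta(\Omega)\}\subset\Omega$. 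For each $t$, the parallel curve $\Gamma_t=\{(s,t)\}$ is a simple loop homotopic to $\bd F$ in $\Omega$, and Gauss--Bonnet applied to convex $F$ bounds its length by $L(t)=\abs{\bd F}-2\pi t\leq\abs{\bd F}$.

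Since $A$ is closed, the flux of $A$ around $\Gamma_t$ equals $\Phi^A$, so the one-dimensional magnetic Poincar\'e inequality on a circle of length $L(t)$ yields
$$
\int_{\Gamma_t}\abs{\nabla^A_T u}^2\,d\sigma\;\geq\;\frac{4\pi^2}{L(t)^2}d(\Phi^A,{\bf Z})^2\int_{\Gamma_t}\abs{u}^2\,d\sigma\;\geq\;\frac{4\pi^2\,d(\Phi^A,{\bf Z})^2}{\abs{\bd F}^2}\int_{\Gamma_t}\abs{u}^2\,d\sigma.
$$
Integrating in $t$ and discarding the normal contribution to $\nabla^A u$ gives the tube estimate $\int_\Omega\abs{\nabla^A u}^2\geq(4\pi^2\abs{\bd F}^{-2})d(\Phi^A,{\bf Z})^2\int_V\abs{u}^2$. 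To convert it into a bound on the full $\int_\Omega\abs{u}^2$, I foliate $\Omega$ by the orthogonal rays from $\bd G$, which form a global parameterization because $G$ is convex. On a ray $\gamma_p$ of length $\ell(p)\in[\beta,B]$, the terminal segment of length $\beta$ lies in $V$. Combining the diamagnetic inequality $\abs{\nabla\abs{u}}\leq\abs{\nabla^A u}$ with Cauchy--Schwarz, and then averaging the pivot point $t_0$ over $[\ell(p)-\beta,\ell(p)]$ rather than fixing a single pivot, yields the transverse comparison
$$
\int_0^{\ell(p)}\abs{u}^2\,dt\;\leq\;\frac{2B}{\beta}\int_{\ell(p)-\beta}^{\ell(p)}\abs{u}^2\,dt+2B^2\int_0^{\ell(p)}\abs{\nabla^A u}^2\,dt.
$$

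Integrating the last inequality against the Jacobian on $\bd G$ (bounded above and below in terms of the curvature of $\bd G$) and combining with the tube estimate produces a bound of the form $N\geq c_1(d^2\beta/(B\abs{\bd F}^2))R-c_2(d^2\beta B/\abs{\bd F}^2)N$ for the numerator $N$ and denominator $R$ of the Rayleigh quotient, where $d=d(\Phi^A,{\bf Z})$; absorbing the second term and carefully tracking the absolute constants yields the coefficient $\pi^2$ of part (b). For part (a) no Fermi tube is available, so I would replace $V$ by a region built intrinsically from the orthogonal rays and compare the lengths of the resulting inner and outer ``parallel'' curves by rough estimates involving the area $\abs{F}$ and diameter $D(F)$; propagating these through the same argument is what produces the coefficient $\abs{F}^2/(\abs{\bd F}^2 D(F)^4)$. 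The main obstacle throughout is precisely the transverse step: a naive pointwise comparison across each ray loses a factor $(\beta/B)^2$, and it is the pivot-averaging trick applied to a terminal interval of length $\beta$ that recovers the linear $\beta/B$ and so accounts for the improvement over \eqref{JFA}.
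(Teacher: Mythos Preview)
Your strategy is genuinely different from the paper's. The paper does \emph{not} prove the theorem by a direct tube/transverse estimate; instead it partitions $\Omega$ into $n\sim B/\beta$ overlapping convex annuli $\Omega_k=F_k\setminus\bar G_k$ (built from level sets of $d(\cdot,G)$ and $d(\cdot,\partial F)$), shows that each piece satisfies $\beta(\Omega_k)/B(\Omega_k)\geq 1/\sqrt 2$ in case~(b) and $\geq\tfrac14\abs{F}/D(F)^2$ in general, applies the \emph{old} quadratic bound \eqref{JFA} to a single piece, and picks up the linear factor $\beta/B$ from $1/n$ via Proposition~\ref{simple1}. The factor $\abs{F}^2/D(F)^4$ in part~(a) arises from a wedge analysis at cut-locus points of $\partial F$ (Lemma~\ref{types}b), not from any length comparison of parallel curves.

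For part~(b) your outline is essentially workable, with two caveats. First, the Jacobian of the $\partial G$-foliation is $1+\kappa_G(p)t$, which is \emph{not} bounded above independently of $G$; your parenthetical dismissal is incorrect as stated. The fix is to exploit monotonicity: for $t\leq t_0$ one has $w(t)\leq w(s)$ for all $s\in[t,t_0]$, so $w(t)\int_t^{t_0}|f'|^2\leq\int_t^{t_0}|f'|^2 w$, and this lets you push the weight inside without ever bounding it pointwise. With this repair the transverse step gives $R\leq \tfrac{2B}{\beta}\int_V|u|^2+2B^2N$ with no dependence on $\kappa_G$. Second, after absorbing the $2B^2N$ term (using $|\partial F|\geq 2B$ and $d\leq\tfrac12$) you obtain a constant strictly smaller than $\pi^2$; I do not see how ``careful tracking'' recovers exactly $\pi^2$, so your claim about the coefficient appears optimistic.

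For part~(a) there is a real gap. Once $\beta(\Omega)$ exceeds the injectivity radius of $\partial F$, the curves $\{\rho_2=t\}$ develop corners at cut-locus points and need not be simple loops, so your magnetic Poincar\'e step on $\Gamma_t$ is unavailable. You say you would ``replace $V$ by a region built intrinsically from the orthogonal rays'' and use ``rough estimates'', but you give no mechanism by which the specific combination $\abs{F}^2/D(F)^4$ would appear. In the paper this factor comes precisely from bounding the angle of the normal wedge at a cut-locus vertex by comparing an auxiliary triangle to $F$; nothing in your sketch substitutes for that argument.
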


Note that, modulo a  factor of $4$, b) is formally identical to \eqref{JFA} with $\beta/B$ replacing $\beta^2/B^2$.

We observe that there is no positive constant $c$ such that
$$
\frac{\beta(\Omega)}{B(\Omega)}\geq c \dfrac{\abs{F}^2}{D(F)^4}
$$
for all doubly convex annuli in the plane (otherwise, the lower bound would be independent on the inner hole, and this is impossible).  This means that Theorem \ref{leading} is not a trivial consequence of \eqref{JFA}. 

In fact, the proof of Theorem \ref{leading} uses a suitable partition of $\Omega$ into overlapping annuli for which $\frac{\beta}{B}$ is, so to speak, as small as possible (see Section 2 below, and in particular Figure 3 for an example). 
Recall the $\delta$-interior ball condition:

\smallskip

{\it given $x\in\bd F$, there is  a ball of radius $\delta$ tangent to $\bd F$ at $x$ and entirely contained in $F$}.

\smallskip

Here and for further applications, we say that the injectivity radius of $\bd F$ is ${\rm Inj}(\bd F)$ if $F$ satisfies the $\delta$-interior ball condition for any $\delta\leq {\rm Inj}(\bd F)$. If $\bd F$ is smooth, its injectivity radius is positive.

\smallskip  

Finally we picture below the family of domains $\Omega_{\eps}$ which realize sharpness. $\Omega_{\eps}$ is the difference between two rectangles with parallel sides, with boundaries being $\eps$ units apart. Hence $\beta(\Omega_{\eps})=\eps$ and $B(\Omega_{\eps})$ is uniformly bounded above by $\sqrt 5$. 

\begin{figure}[H]                                                                                          
\begin{center}
 \includegraphics[width=12cm]{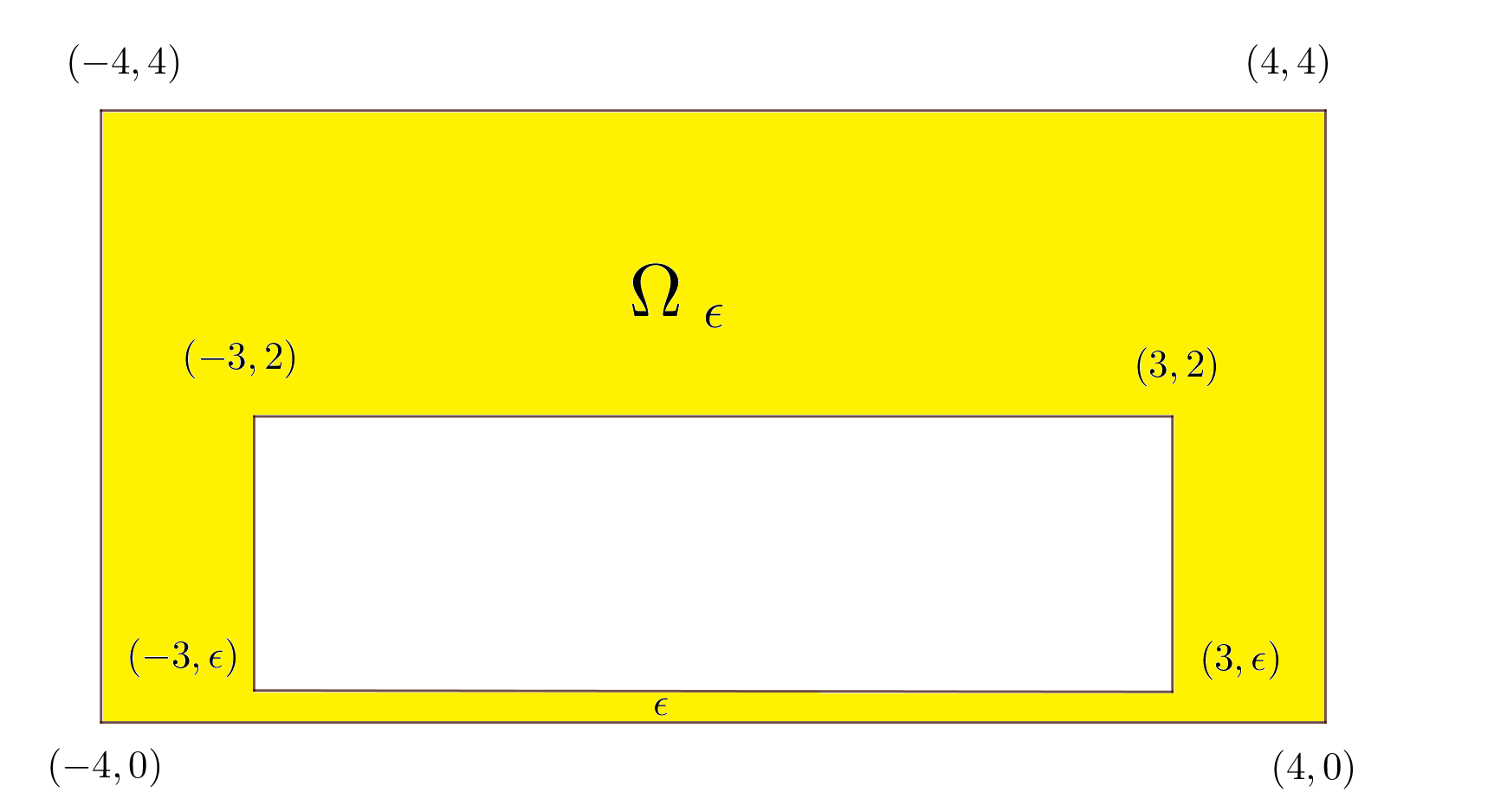}
\caption{\label{sharpness} The domain $\Omega_{\eps}$ has minimal width $\eps$ and lowest eigenvalue going to zero proportionally with $\eps$.}
\end{center}
\end{figure}

We show in Section \ref{simpleexample} that 
$$
 \frac{\pi^2}{360\sqrt 5}d(\Phi^A,{\bf Z})^2 \leq \dfrac{\lambda_1(\Omega_{\epsilon},A)}{\epsilon}\leq \frac{1}{10},
$$
so that $\lambda_1(\Omega,A)$ goes to zero proportionally to $\eps\sim\frac{\beta}{B}$.


\subsection{A general lower bound for multiply connected domains} 

Now let $\Omega$ be an $n$-holed planar domain, which we write as follows: 
\begin{equation}\label{nholed}
\Omega=F\setminus (\bar G_1\cup\dots\cup \bar G_n)
\end{equation}
where the inner holes $G_1,\dots,G_n$ are smooth, open and disjoint. We furthermore assume that  $F, G_1,\dots,G_n$ are convex. Note that:
$$
\bd\Omega=\bd F\cup\bd G_1\cup\dots\cup\bd G_n.
$$
We will call $\bd G_1\cup\dots\cup\bd G_n$ the {\it inner boundary} of $\Omega$.
The minimal and maximal widths of $\Omega$ are defined as in the case $n=1$, namely $\beta$ is the minimal length of a line segment contained in $\Omega$ and hitting the inner boundary orthogonally, and the maximal length of such line segments is by definition the maximal width $B$. 

\smallskip

It is clear that we could replace $B(\Omega)$ by the diameter of $F$, and $\beta(\Omega)$ by the invariant:
$$
\tilde\beta(\Omega)=\min\{d(\bd G_j,\bd G_k), d(\bd G_h,\bd F): j\ne k,h=1,\dots, n\}.
$$

In this section we give a lower bound of $\lambda_1(\Omega,A)$ when $\Omega$ has an arbitrary number of convex holes. 

Here is the estimate.
\begin{theorem}\label{nholes} Let $\Omega=F\setminus (\bar G_1\cup\dots\cup \bar G_n)$ be an n-holed planar domain,  where $F, G_1,\dots,G_n$ are smooth, open and convex. Let $A$ be a closed potential having  flux $\Phi_j$ around 
the $j$-th inner boundary curve $\bd G_j$, for $j=1,\dots,n$, and let
$
\gamma=\min_{j=1,\dots,n}d(\Phi_j,{\bf Z}).
$
Then we have:
\begin{equation}\label{multiply}
\lambda_1(\Omega,A)\geq \dfrac{\pi^2}{2\Big(\abs{\bd F}+2\pi B(\Omega)\Big)^2}\dfrac{\beta(\Omega)^4}{B(\Omega)^4}\cdot\gamma^2.
\end{equation}
where $\beta(\Omega)$ and $B(\Omega)$ are, respectively, the minimal and maximal width of $\Omega$.
\end{theorem}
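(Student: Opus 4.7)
The plan is to reduce the estimate to the doubly-connected bound \eqref{JFA} by isolating the ``worst'' hole. Let $j_0\in\{1,\dots,n\}$ be an index realizing $d(\Phi_{j_0},{\bf Z})=\gamma$. Reading off the bound \eqref{multiply}, the factor $|\partial F|+2\pi B(\Omega)$ is precisely the perimeter of the outward $B(\Omega)$-tubular neighborhood of the convex set $F$, which is still convex. This strongly suggests working with the doubly-connected annulus
$$\tilde\Omega := F'\setminus\bar G_{j_0},\qquad F' := \{x\in\real 2 : d(x,F)\le B(\Omega)\},$$
which is bordered by convex curves and contains $\Omega$. Without loss of generality (gauge invariance), we may also normalize each $\Phi_j\in[0,1)$.

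\medskip

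Next, given a first eigenfunction $u$ on $(\Omega,A)$, normalized so that $\int_\Omega|u|^2=1$, I would extend it to a test function $\tilde u$ on $\tilde\Omega$ in two stages. First, through the outer collar $F'\setminus F$, by a radially constant extension along outward normals of $\partial F$ (well-defined by the convexity of $F$ and compatible with the magnetic Neumann condition $\nabla^A_N u=0$, so that the extra Dirichlet energy is controlled). Second, across each remaining hole $G_k$ ($k\ne j_0$), by a harmonic-type extension together with a simultaneous extension of $A$ to a closed form $\tilde A$ on $\tilde\Omega$ whose only nontrivial flux is $\Phi_{j_0}$ around $\partial G_{j_0}$. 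Applying \eqref{JFA} to the doubly-connected domain $\tilde\Omega$ then yields
$$\int_{\tilde\Omega}|\nabla^{\tilde A}\tilde u|^2\ge \frac{4\pi^2}{(|\partial F|+2\pi B(\Omega))^2}\cdot\frac{\beta(\tilde\Omega)^2}{B(\tilde\Omega)^2}\cdot\gamma^2\cdot\int_{\tilde\Omega}|\tilde u|^2,$$
from which the claim should follow by estimating the widths of $\tilde\Omega$ and the extra cost of the extension in terms of $\beta(\Omega)$ and $B(\Omega)$. This last step is expected to introduce an additional factor of $(\beta/B)^2$, accounting for the $\beta^4/B^4$ in the conclusion (versus the $\beta^2/B^2$ in \eqref{JFA}) and for the constant $\tfrac12$.

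\medskip

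The main obstacle is this extension step across the other holes $G_k$, $k\ne j_0$. Since the flux $\Phi_k$ is generically not an integer, $A$ does not admit a closed extension inside $G_k$, so a na\"ive fill-in is impossible. One viable route is a partition argument: cover $\Omega$ by a tubular neighborhood $T_{j_0}$ of $\partial G_{j_0}$ on which the $1$-dimensional magnetic Poincar\'e inequality on each equidistant curve $C_t\subset T_{j_0}$ gives
$$\int_{C_t}|\nabla^A_Tu|^2\,ds\ge \frac{4\pi^2\gamma^2}{|C_t|^2}\int_{C_t}|u|^2\,ds,$$
together with $|C_t|\le|\partial G_{j_0}|+2\pi t\le|\partial F|+2\pi B(\Omega)$, integrate via the co-area formula over $T_{j_0}$, and then transfer from $T_{j_0}$ to $\Omega$ using a Poincar\'e/diamagnetic argument on the complement, in which case the factor $(\beta/B)^2$ appears as the ratio of $L^2$-masses between the tube and the full domain. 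Whichever route is taken, the precise bookkeeping of widths and Rayleigh quotients is the technical heart of the proof.
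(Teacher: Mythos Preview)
Your proposal is not a proof: both routes you sketch have unresolved obstacles, and the paper's argument is altogether different.

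\medskip

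\textbf{Route 1 (extension to a doubly-connected domain).} You correctly identify the obstacle and then do not resolve it. If some $\Phi_k$ ($k\neq j_0$) is not an integer, there is simply no closed one-form on $G_k$ extending $A$; changing $A$ to a different closed form $\tilde A$ with integer flux around $G_k$ alters $\nabla^Au$ on $\Omega$, so the Rayleigh quotient you compute on $\tilde\Omega$ is no longer the one for $(\Omega,A)$. The ``harmonic-type extension'' of $u$ across $G_k$ is likewise undefined in the magnetic setting unless the potential extends. Also, your choice of $j_0$ as the hole realizing the \emph{minimum} $d(\Phi_j,{\bf Z})$ is backwards for a lower bound via a single annulus: any $j_0$ would give at least $\gamma^2$, so one would rather pick the best hole---but this is moot since the extension step fails. (Incidentally, in the paper the factor $\abs{\bd F}+2\pi B(\Omega)$ is \emph{not} the perimeter of the outward tube of $F$; it arises as an upper bound for $\abs{\bd G_j}+2\pi B(\Omega_j)$, the perimeter of the $B$-tube of the inner hole $G_j$, via $\abs{\bd G_j}\le\abs{\bd F}$.)

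\medskip

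\textbf{Route 2 (tube around $\bd G_{j_0}$).} The equidistant curves $C_t$ to $\bd G_{j_0}$ need not lie in $\Omega$ once $t$ exceeds the distance to the nearest other hole, so the one-dimensional inequality you quote is only available on a thin tube. The ``transfer to $\Omega$'' on the complement is then a Poincar\'e-type inequality on a multiply connected region (still containing the holes $G_k$, $k\neq j_0$); the diamagnetic inequality reduces this to a statement about $|u|$, but you have not stated or proved the needed inequality, nor explained why its constant produces exactly the extra $(\beta/B)^2$. As written this is a heuristic, not a proof.

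\medskip

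\textbf{What the paper actually does.} The paper avoids extensions entirely by a \emph{disjoint} Voronoi-type partition: set
\[
F_j=\{x\in F:\ d(x,G_j)<d(x,G_k)\ \text{for all}\ k\neq j\},\qquad \Omega_j=F_j\setminus\bar G_j,
\]
so that $\{\Omega_j\}_{j=1}^n$ is a disjoint partition of $\Omega$ and each $\Omega_j$ is an annulus with inner boundary $\bd G_j$. One shows each $\Omega_j$ is star-shaped with respect to $\bd G_j$, and on the equidistant pieces of $\bd F_j$ the normal angle satisfies $\cos\theta_x\ge\beta(\Omega_j)/(2B(\Omega_j))$; Green's formula for $\rho_1=d(\cdot,\bd G_j)$ then gives $\abs{\bd F_j}\le \tfrac{2B}{\beta}\big(\abs{\bd G_j}+2\pi B\big)$. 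Plugging these into the star-shaped annulus estimate from \cite{CS1} (not \eqref{JFA}, which requires convex outer boundary) yields a uniform lower bound for $\lambda_1(\Omega_j,A)$ with flux factor $d(\Phi_j,{\bf Z})^2\ge\gamma^2$, and Proposition~\ref{simple1} for disjoint partitions gives \eqref{multiply}. The $\beta^4/B^4$ arises as the product of $m(\Omega_j)\cdot\beta/B$ from the star-shaped bound and $(\beta/B)^2$ from the perimeter estimate---not from any extension cost.
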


The proof uses a suitable decomposition of $\Omega$ into a finite union of annuli, and a lower bound proved in \cite{CS1} for annuli whose outer boundary is star-shaped with respect to the inner boundary curve. A stronger estimate is proved when the inner holes are disks of the same radius (see Theorem \ref{smalldisks}).


\subsection{A lower bound for Aharonov-Bohm operators with many poles}

The power $\frac{\beta^4}{B^4}$ in the previous estimate is probably not sharp; it appears to be there for technical reasons. By shrinking the inner boundary curves to points we obtain an estimate in terms of $\frac{\beta^2}{B^2}$, which has an interesting interpretation in terms of Aharonov-Bohm operators with many poles. 

Precisely, we fix a convex domain $\Omega$ and choose $n$ points inside it, say $\mathcal P=\{p_1,\dots,p_n\}$. Consider the punctured  domain
$\Omega\setminus\mathcal P$. Given a closed one-form $A$, we define:
$$
\lambda_1(\Omega\setminus\mathcal P,A)=\liminf_{\delta\to 0}\lambda_1(\Omega\setminus\mathcal P(\delta),A)
$$
where $\mathcal P(\delta)$ is the $\delta$-neighborhood of $\mathcal P$ (it obviously consists of a finite set of disks of radius $\delta$). It is not our scope in this paper to investigate the convergence in terms of $\delta$; however, what we are looking at could be interpreted as the first eigenvalue of a Aharonov-Bohm operator with poles $p_1,\dots,p_n$ and Neumann boundary conditions. 
The proof of the theorem in the previous section simplifies, to give a general lower bound in terms of the distance between the poles, and the distance of each pole to the boundary. To that end, define:
$$
\twosystem
{\beta(\mathcal P)=\min\{d(p_j,p_k), d(p_m,\bd\Omega): p_j\ne p_k,p_m\in\mathcal P\}}
{B(\mathcal P)=\max\{d(p_j,p_k), d(p_m,\bd\Omega): p_j\ne p_k,p_m\in\mathcal P\}}
$$
Of course $B({\mathcal P})$ could be conveniently bounded above by the diameter of $\Omega$.
Let $A$ be as usual a closed one-form having flux $\Phi_j$ around the pole $p_j$. Then we have:

\begin{theorem}\label{punctured} Let $\Omega$ be a convex domain and $\mathcal P=\{p_1,\dots,p_n\}$ a finite set of poles. For the punctured domain $\Omega\setminus\mathcal P$ we have the bound:
$$
\lambda_1(\Omega\setminus\mathcal P,A)\geq\dfrac{4\pi^2}{\abs{\bd\Omega}^2}\dfrac{\beta(\mathcal P)^2}{B({\mathcal P})^2}\gamma^2.
$$
where $\gamma=\min_{j=1,\dots,n}d(\Phi_j,{\bf Z})$, and $\Phi_j$ is the flux of the closed potential $A$ around $p_j$.
\end{theorem}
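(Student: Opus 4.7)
The argument reduces to the setting of Theorem \ref{nholes} and sharpens it by exploiting that the inner ``holes'' degenerate to points. By definition $\lambda_1(\Omega \setminus \mathcal P, A) = \liminf_{\delta \to 0}\lambda_1(\Omega \setminus \mathcal P(\delta), A)$, so it suffices to produce a uniform-in-$\delta$ lower bound on $\Omega^\delta := \Omega \setminus \mathcal P(\delta)$ for all $\delta > 0$ small enough that the disks $B(p_j,\delta)$ are pairwise disjoint and contained in $\Omega$. Note that $\Omega^\delta$ is an $n$-holed convex planar domain with convex (circular) holes, to which Theorem \ref{nholes} applies; however its $\beta^4/B^4$-dependence is not optimal here, and the extra term $2\pi B(\Omega^\delta)$ in the denominator does not vanish in the limit, so the raw bound of that theorem must be sharpened.

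The decomposition strategy of Theorem \ref{nholes} simplifies as follows. Partition $\Omega$ into Voronoi cells $V_j = \{x \in \Omega : d(x,p_j) \leq d(x,p_k)\ \text{for all } k\}$. Each $V_j$ is convex (an intersection of $\Omega$ with half-planes bounded by perpendicular bisectors), contains only the pole $p_j$, and satisfies $|\bd V_j| \leq |\bd \Omega|$ by perimeter monotonicity for convex subsets. This gives a disjoint decomposition $\Omega^\delta = \bigsqcup_j \Omega_j^\delta$ with $\Omega_j^\delta := V_j \setminus B(p_j,\delta)$ a doubly connected annulus with convex outer boundary and circular inner boundary. On each such piece, since orthogonal rays to the inner circle are radial segments from $p_j$, apply \eqref{JFA}:
$$
\lambda_1(\Omega_j^\delta, A) \geq \frac{4\pi^2}{|\bd V_j|^2}\cdot\frac{\beta(\Omega_j^\delta)^2}{B(\Omega_j^\delta)^2}\cdot d(\Phi_j,{\bf Z})^2,
$$
and combine via the variational subadditivity $\lambda_1(\Omega^\delta, A) \geq \min_j \lambda_1(\Omega_j^\delta, A)$, valid for Neumann magnetic Rayleigh quotients since no boundary condition is imposed across internal interfaces. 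Finally pass to $\delta \to 0$.

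\textbf{Main obstacle.} The delicate point is the geometric comparison of the cell widths $\beta(\Omega_j^\delta), B(\Omega_j^\delta)$ with the intrinsic invariants $\beta(\mathcal P), B(\mathcal P)$. The minimum radius $d(p_j,\bd V_j)$ is cleanly bounded below by $\beta(\mathcal P)/2$ via the perpendicular-bisector structure together with $d(p_j,\bd \Omega) \geq \beta(\mathcal P)$, but the maximum radius $\max_\theta d(p_j,\bd V_j \text{ along direction } \theta)$ need not be controlled by $B(\mathcal P)$: Voronoi cells can elongate along $\bd \Omega$, as a single pole in a long thin convex $\Omega$ already illustrates. Overcoming this requires either a finer decomposition (e.g.\ truncating each $V_j$ to a disk of radius comparable to $B(\mathcal P)$ around $p_j$ and treating the excess annular shells via Hardy-type estimates), or replacing the direct use of \eqref{JFA} by a radial-angular analysis using polar coordinates around each pole, the one-dimensional magnetic Poincar\'e inequality applied to concentric circles lying in $\Omega$, and the convexity of $\Omega$ to match the claimed factor $|\bd \Omega|^{-2}\beta(\mathcal P)^2 B(\mathcal P)^{-2}$ with the sharp constant $4\pi^2$.
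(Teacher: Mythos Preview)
Your decomposition is exactly the paper's: for equal--radius disks the equidistant sets $\partial H_{jk}$ degenerate to perpendicular bisectors, so the pieces $F_j$ are convex Voronoi cells; one applies \eqref{JFA} to each annulus $\Omega_j^\delta=F_j\setminus \bar B(p_j,\delta)$, uses $|\partial F_j|\leq|\partial\Omega|$ by convexity, and combines via Proposition~\ref{simple1}. The paper packages this as Theorem~\ref{smalldisks} and then simply says ``letting $r\to 0$ gives Theorem~\ref{punctured}.''

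The obstacle you isolate is therefore the one step the paper does \emph{not} spell out. Your concern is legitimate with the literal reading of $B(\mathcal P)$: if $d(p_m,\partial\Omega)$ means the ordinary (minimum) distance, then $\lim_{\delta\to 0}B(\Omega^\delta)$---which equals $\max_m\max_{y\in\partial\Omega}d(p_m,y)$, the \emph{farthest} boundary point from some pole---can exceed $B(\mathcal P)$ (a single pole in a long thin $\Omega$ already shows this). The paper's one--line passage to the limit implicitly identifies $B(\mathcal P)$ with this larger quantity; with that reading the proof is complete and no Hardy--type refinement or truncated cells are needed. So you have reproduced the paper's argument and correctly flagged a definitional looseness in the statement rather than a genuine gap in the method: either read $d(p_m,\partial\Omega)$ in $B(\mathcal P)$ as the maximal distance from $p_m$ to $\partial\Omega$, or replace $B(\mathcal P)$ by the diameter of $\Omega$ (as the paper itself suggests one may do), and your proof goes through verbatim.
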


\nero In a forthcoming paper, we will give upper bounds for the Laplacian with zero magnetic field on multiply connected planar domains, which are closely related to the topology (number of holes) of the domain. 

\smallskip

The rest of the paper is devoted to the proof of Theorems \ref{leading},\ref{nholes} and \ref{punctured}.

\section{Proof of Theorem \ref{leading}}\label{sectiontwo}

The proof depends on a suitable way to partition our domain $\Omega$. We first remark the simple fact that the first eigenvalue of a domain is controlled from below by the smallest first eigenvalue of the subdomains of a partition of 
$\Omega$ (Proposition \ref{simple1}). Then, we need to extend inequality \eqref{JFA} to piecewise-smooth boundaries, see Section \ref{pws}. In Section \ref{preparatory} we state our main geometric facts, Lemma  \ref{second} and Lemma \ref{secondb}, and then we prove Theorem \ref{leading}  (see Section \ref{leadingproof}). Finally, in Section \ref{decomposition}, we define the partition and we prove Lemma  \ref{second} and Lemma \ref{secondb}.

\subsection{A simple lemma}
We say that the family of open subdomains $\{\Omega_1,\dots,\Omega_n\}$ is a {\it partition} of $\Omega$, if $\bar\Omega=\bar\Omega_1\cup\dots\cup\bar\Omega_n$. Thus, the members of the partition might overlap and some of the intersections $\Omega_j\cap\Omega_j$ could have positive measure. 
If furthemore  $\Omega_j\cap\Omega_k$ is empty for all $j\ne k$ then we say that the partition is {\it disjoint}.  We observe  the following  standard fact whose proof is easy:
\begin{prop}\label{simple1} Let $\{\Omega_1,\dots,\Omega_n\}$ be a partition of the domain $\Omega$. Let $A$ be any closed potential. Then, there is an index $k=1,\dots,n$ such that
\begin{equation}\label{decofirst}
\lambda_1(\Omega,A)\geq \dfrac{1}{n}\lambda_1(\Omega_k,A).
\end{equation}
If the partition is disjoint, then:
\begin{equation}\label{decosecond}
\lambda_1(\Omega,A)\geq \min_{j=1,\dots,n}\lambda_1(\Omega_j,A).
\end{equation}
\end{prop}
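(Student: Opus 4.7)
The plan is to rely only on the Rayleigh quotient characterization (1) and the fact that the minimization is over all of $H^1$, so that no boundary condition is required of test functions. Thus, if $u$ is a first eigenfunction on $\Omega$, its restriction to any open subdomain $\Omega_j$ is admissible as a trial function for $\lambda_1(\Omega_j,A)$, yielding the basic inequality
\begin{equation*}
\int_{\Omega_j}\abs{\nabla^Au}^2\,dx\ \geq\ \lambda_1(\Omega_j,A)\int_{\Omega_j}\abs{u}^2\,dx
\end{equation*}
for every $j=1,\dots,n$.

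For the disjoint case, I would sum these inequalities over $j$ and exploit additivity: since $\bar\Omega=\bigsqcup_j\bar\Omega_j$ up to a set of measure zero, both sides sum exactly to the integrals over $\Omega$. Bounding each $\lambda_1(\Omega_j,A)$ below by $\min_j\lambda_1(\Omega_j,A)$ and using the normalization of $u$, this directly gives \eqref{decosecond} after dividing by $\int_\Omega\abs{u}^2\,dx$.

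For the overlapping case, additivity fails, so I would instead use a pigeonhole argument on the $L^2$-mass of $u$. Normalize $u$ so that $\int_\Omega\abs{u}^2\,dx=1$; since the $\Omega_j$ cover $\Omega$, one has $\sum_{j=1}^n\int_{\Omega_j}\abs{u}^2\,dx\geq 1$, hence there exists an index $k$ for which $\int_{\Omega_k}\abs{u}^2\,dx\geq 1/n$. Applying the basic inequality for this particular $k$ and bounding $\int_{\Omega_k}\abs{\nabla^Au}^2\,dx$ above by $\int_\Omega\abs{\nabla^Au}^2\,dx=\lambda_1(\Omega,A)$ immediately yields $\lambda_1(\Omega_k,A)\leq n\,\lambda_1(\Omega,A)$, which is \eqref{decofirst}.

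There is really no obstacle here: the whole argument is an elementary Rayleigh-quotient comparison combined with a pigeonhole step, and the only small point to be careful about is that the magnetic Neumann eigenvalue $\lambda_1(\Omega_j,A)$ admits exactly the same variational formula (1) over $H^1(\Omega_j)$ without any prescribed boundary condition, so that $u|_{\Omega_j}\in H^1(\Omega_j)$ is indeed an admissible competitor regardless of whether $\bd\Omega_j$ meets $\bd\Omega$.
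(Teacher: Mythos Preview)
Your proof is correct and follows essentially the same approach as the paper: both use the restriction of a first eigenfunction as a test function on each $\Omega_j$, then sum directly in the disjoint case and apply a pigeonhole argument on the $L^2$-mass in the overlapping case. The only cosmetic difference is that the paper selects $k$ maximizing $\int_{\Omega_k}\abs{u}^2$ rather than invoking the average, but this yields the same inequality $\int_{\Omega_k}\abs{u}^2\geq \tfrac{1}{n}\int_{\Omega}\abs{u}^2$.
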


\begin{proof}  We start proving \eqref{decofirst}. Let $u$ be an eigenfunction associated to $\lambda_1(\Omega,A)$. We use it as test-function for $\lambda_1(\Omega_j,A)$ and obtain, for all $j$:
\begin{equation}\label{decothird}
\lambda_1(\Omega_j,A)\int_{\Omega_j}\abs{u}^2\leq\int_{\Omega_j}\abs{\nabla^Au}^2.
\end{equation}
Now 
$$
\int_{\Omega}\abs{u}^2\leq\sum_{j=1}^n\int_{\Omega_j}\abs{u}^2\leq n\int_{\Omega_k}\abs{u}^2
$$
where the index $k$ is chosen so that $\int_{\Omega_k}\abs{u}^2$ is maximum among all $j=1,\dots,n$.
Then:
$$
\begin{aligned}
\lambda_1(\Omega_k,A)\int_{\Omega}\abs{u}^2&\leq n \lambda_1(\Omega_k,A) \int_{\Omega_k}\abs{u}^2\\
&=n\int_{\Omega_k} \abs{\nabla^Au}^2\\
&\leq n\int_{\Omega}\abs{\nabla^Au}^2\\
&=n\lambda_1(\Omega,A)\int_{\Omega}\abs{u}^2
\end{aligned}
$$
That is:
$
\lambda_1(\Omega_k,A)\leq n \lambda_1(\Omega,A),
$
which is the assertion.

\smallskip

For the proof of \eqref{decosecond}, let $\lambda_{\rm min}=\min_{j=1,\dots,n}\lambda_1(\Omega_j,A)$. From \eqref{decothird} we have, for all $j$:
$$
\int_{\Omega_j}\abs{\nabla^Au}^2\geq \lambda_1(\Omega_j,A)\int_{\Omega_j}\abs{u}^2\geq  \lambda_{\rm min}\int_{\Omega_j}\abs{u}^2
$$
We now sum over $j=1,\dots,n$ and obtain
$
\int_{\Omega}\abs{\nabla^Au}^2\geq  \lambda_{\rm min}\int_{\Omega}\abs{u}^2.
$
As $u$ is a first eigenfunction the left-hand side is precisely $\lambda_1(\Omega,A)\int_{\Omega}\abs{u}^2$, and the inequality follows. 

\end{proof}


\subsection{Convex annuli with piecewise-smooth boundary}\label{pws}
From now on  $\Omega$ will be an annulus in the plane with boundary components $\Gamma_{\rm int},\Gamma_{\rm ext}$ which we assume convex and piecewise-smooth. We will write $\Omega=F\setminus \bar G$ where $F$ and $G$ are open, convex, with piecewise smooth boundary. In that case $\Gamma_{\rm int}=\bd G$ and $\Gamma_{\rm ext}=\bd F$. 

Let $p$ be a point of $\bd G$ where  $\bd G$ is not smooth ($p$ will then be called a {\it vertex}).  The {\it normal cone} of $G$ at $p$ is the set
$$
N_G(p)=\{x\in\real 2: \scal{x}{y-p}\leq 0,\quad\text{for all}\quad y\in G\}.
$$ 
Then $N_G(p)$ is the closed exterior wedge bounded by the normal lines to the two smooth curves concurring at $p$, its boundary is the broken line depicted in the figure below.  Call $\alpha_p$ its angle at $p$.

\nero We remark  the obvious fact that $0<\alpha_p<\pi$.

\medskip

We now define the minimum and maximum width in the piecewise-smooth case. These are defined  in \eqref{betapbp} and depicted in the Figure \ref{picturecone} below.

\begin{figure}[H]
\begin{center}
 \includegraphics[width=9cm]{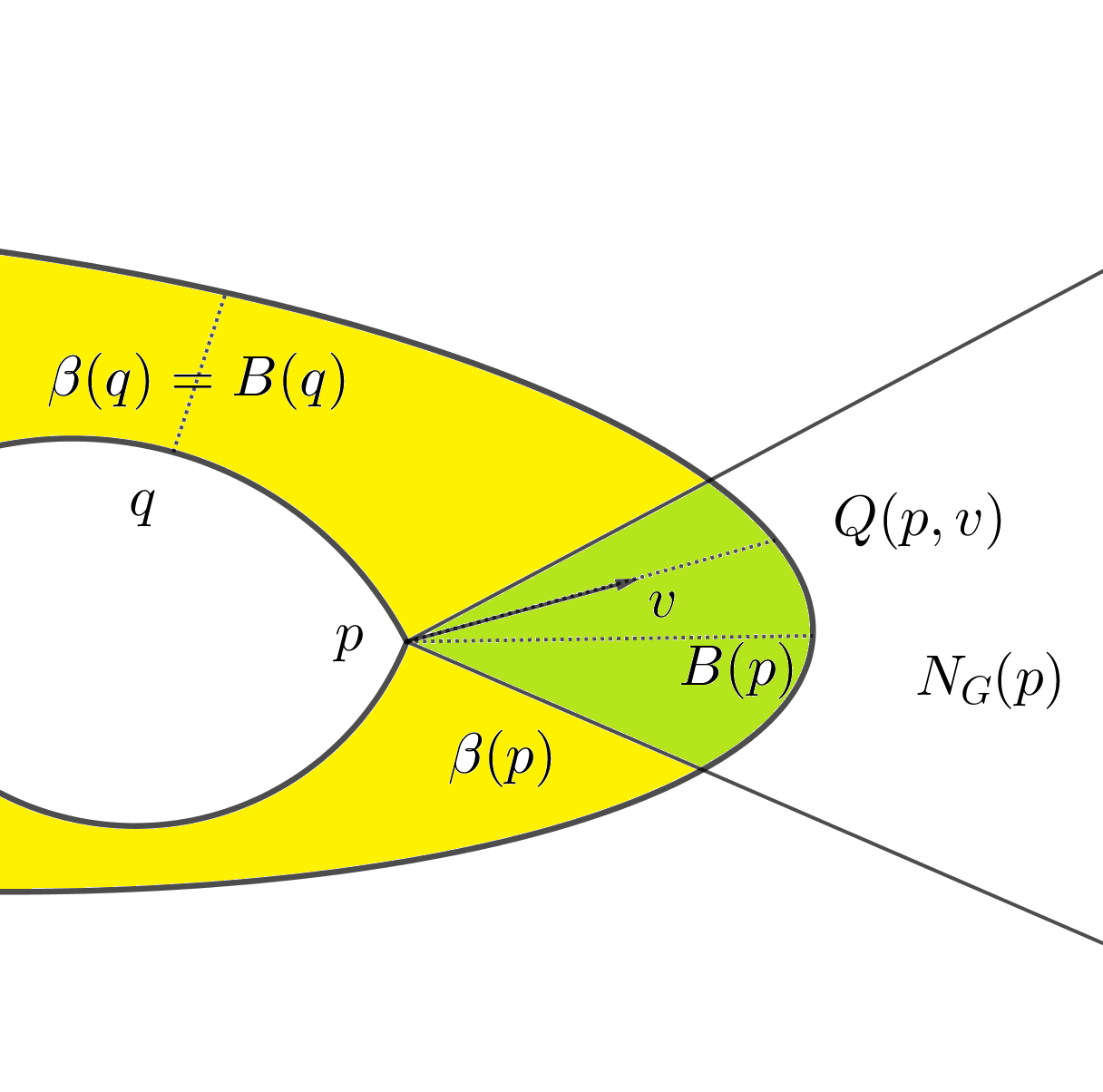}
\caption{\label{picturecone}A vertex $p$ of $\bd G$ and its normal cone $N_G(p)$.}
\end{center}
\end{figure}

 For a unit vector $v$ applied in $p$ and pointing inside $N_G(p)$ we let
$
\gamma_{p,v}(t)=p+tv
$
denote the ray exiting $p$ in the direction $v$, and let $Q(p,v)$ be the intersection 
of $\gamma_{p,v}$ with $\Gamma_{\rm ext}=\bd F$. We define:
\begin{equation}\label{betapbp}
\twosystem
{\beta(p)=\inf_{v\in N_G(p)}d(p, Q(p,v))}
{B(p)=\sup_{v\in N_G(p)}d(p, Q(p,v))}
\end{equation}
We notice that at a smooth point $q$ the cone at $q$ degenerates to the normal segment at $q$. Hence at a smooth point $q$ one has 
$$
\beta(q)=B(q).
$$

We now define
\begin{equation}\label{betab}
\twosystem
{\beta(\Omega)=\inf\{\beta(p): p\in\bd G\}}
{B(\Omega)=\sup\{\beta(p): p\in\bd G\}}
\end{equation}

$\beta(\Omega)$ and $B(\Omega)$ will be called the {\it minimum width} and, respectively, the {\it maximum width} of $\Omega$. We remark that when the two boundary components are smooth and parallel then $\beta=B$ and the ratio $\frac{\beta}{B}$ assumes its largest possible value, which is $1$.

\medskip

As a first step in the proof of Theorem \ref{leading}, we extend the inequality \eqref{JFA} to the piecewise-smooth case. 

\begin{theorem}\label{first} Let $\Omega=F\setminus \bar G$ be an annulus in the plane whose boundary components are convex and piecewise smooth. Let $\beta=\beta(\Omega)$ and $B=B(\Omega)$ be the  invariants defined in \eqref{betab}. 
Then for any closed potential having flux $\Phi^A$ around the inner boundary curve one has the lower bound:
$$
\lambda_1(\Omega,A)\geq \dfrac{4\pi^2}{\abs{\bd F}^2}\dfrac{\beta(\Omega)^2}{B(\Omega)^2} d(\Phi^A,{\bf Z})^2,
$$
where $\abs{\bd F}$ is the length of the outer boundary. 
\end{theorem}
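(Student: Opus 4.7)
The plan is to deduce Theorem \ref{first} from the smooth-boundary case \eqref{JFA} (proved in \cite{CS1}) by approximating $\Omega$ from the inside by smooth convex annuli. For $\epsilon>0$ small, take $G_\epsilon\supset G$ to be a smooth convex domain obtained by rounding each vertex of $\bd G$ by a tangent circular arc of radius $O(\epsilon)$ on its outer side, and take $F_\epsilon\subset F$ an analogous smooth convex approximation of $F$ from its inner side. Then $\Omega_\epsilon := F_\epsilon\setminus\bar G_\epsilon\subset\Omega$ is a smooth convex annulus converging to $\Omega$ in Hausdorff distance.

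Three invariants must be checked to be continuous along this approximation. The perimeter $\abs{\bd F_\epsilon}$ converges to $\abs{\bd F}$ by Hausdorff-continuity of perimeter on convex planar sets. The flux $\Phi^A$ is unchanged because $\bd G_\epsilon$ is freely homotopic to $\bd G$ inside $\Omega$, on which $A$ is closed. Most importantly, $\beta(\Omega_\epsilon)\to\beta(\Omega)$ and $B(\Omega_\epsilon)\to B(\Omega)$: at each vertex $p$ of $\bd G$, the outward unit normals along the rounded arc of $\bd G_\epsilon$ replacing $p$ sweep through exactly the normal cone $N_G(p)$, so every ray $\gamma_{p,v}$ with $v\in N_G(p)$ appearing in \eqref{betapbp} arises as a limit of genuine normal rays to $\bd G_\epsilon$ emanating from points converging to $p$; the infimum and supremum in \eqref{betab} then pass to the limit.

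Applying \eqref{JFA} to each smooth $\Omega_\epsilon$ yields
\begin{equation*}
\lambda_1(\Omega_\epsilon,A) \geq \dfrac{4\pi^2}{\abs{\bd F_\epsilon}^2}\dfrac{\beta(\Omega_\epsilon)^2}{B(\Omega_\epsilon)^2}d(\Phi^A,{\bf Z})^2.
\end{equation*}
On the other hand, a first eigenfunction $u$ of $(\Omega,A)$ restricts to an admissible $H^1$ test function on $\Omega_\epsilon\subset\Omega$ (no orthogonality constraint appears in the variational characterization of $\lambda_1^N$), so by the min-max principle
\begin{equation*}
\lambda_1(\Omega_\epsilon,A) \leq \dfrac{\int_{\Omega_\epsilon}\abs{\nabla^A u}^2}{\int_{\Omega_\epsilon}\abs{u}^2} \longrightarrow \dfrac{\int_\Omega\abs{\nabla^A u}^2}{\int_\Omega\abs{u}^2}=\lambda_1(\Omega,A)
\end{equation*}
as $\epsilon\to 0$, by dominated convergence. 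Combining the two inequalities and passing to the limit yields the bound of Theorem \ref{first}.

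The main obstacle I anticipate is the geometric step establishing $\beta(\Omega_\epsilon)\to\beta(\Omega)$ and $B(\Omega_\epsilon)\to B(\Omega)$: one must verify that the vertex smoothing does not create anomalously short rays (which would depress $\beta$) nor anomalously long ones (inflating $B$). Both are controlled by the elementary observation that rounding a convex corner tangentially reproduces exactly the full set of outward normal directions of the cone $N_G(p)$, so the smoothed normal rays form an ``interpolation'' of the rays already considered at the vertex in the piecewise-smooth definitions \eqref{betapbp}--\eqref{betab}. All remaining steps are routine limiting arguments, and no additional analysis of the magnetic operator is required.
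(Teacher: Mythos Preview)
Your proposal is correct and follows essentially the same approximation strategy as the paper: approximate $\Omega$ from inside by smooth convex annuli $\Omega_\epsilon$, apply \eqref{JFA} to each, use a first eigenfunction of $(\Omega,A)$ as a test function on $\Omega_\epsilon$, and pass to the limit. The paper takes $G_\epsilon$ to be the full $\epsilon$-neighborhood of $G$ (which is automatically $C^1$) rather than a local corner-rounding, but otherwise the argument is identical, and the paper simply asserts the convergence $\mathcal L(\Omega_\epsilon)\to\mathcal L(\Omega)$ that you take more care to justify.
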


\begin{proof}

First,  $\Omega$ admits an exhaustion by convex annuli with $C^1$-boundary, say $\{\Omega_{\epsilon}:\eps>0\}$. By that we mean:

\medskip

a) $\Omega_{\epsilon}=F_{\eps}\setminus \bar G_{\eps}$ where $F_{\eps}$ and $G_{\eps}$ are convex and have $C^1$-smooth boundary;

\medskip

b) $F_{\eps}\subseteq F$ and $G_{\eps}\supseteq G$ so that $\Omega_{\eps}\subseteq\Omega$;

\medskip

c) $\Omega=\cup_{\eps>0}\Omega_{\eps}$ and in particular $\lim_{\eps\to 0}\abs{\Omega\setminus\Omega_{\eps}}=0$.

\medskip

To construct $F_{\eps}$ we  round off corners at distance $\eps$ to each of the vertices of $\bd F$; to construct  $G_{\eps}$ we just take the convex domain bounded by the $\eps$-neighborhood of $G$.

\smallskip

Let $u$ be an eigenfunction associated to $\lambda_1(\Omega,A)$; by restriction we obtain a test-function for $\Omega_{\eps}$, hence by the min-max principle:
$$
\dfrac{\int_{\Omega_{\eps}}\abs{\nabla^Au}^2}{\int_{\Omega_{\eps}}\abs{u}^2}\geq\lambda_1(\Omega_{\eps},A).
$$
Let $\mathcal L(\Omega)$ be the functional:
$$
\mathcal L(\Omega)= \dfrac{4\pi^2}{\abs{\bd F}^2}\dfrac{\beta(\Omega)^2}{B(\Omega)^2} d(\Phi^A,{\bf Z})^2
$$
We can apply \eqref{JFA} and obtain $\lambda_1(\Omega_{\eps})\geq \mathcal L(\Omega_{\eps})$ because $\Omega_{\eps}$ has smooth boundary; then, for all $\eps>0$:
$$
\dfrac{\int_{\Omega_{\eps}}\abs{\nabla^Au}^2}{\int_{\Omega_{\eps}}\abs{u}^2}\geq 
\mathcal L(\Omega_{\eps}).
$$
We now pass to the limit as $\epsilon\to 0$ on both sides; as  $\mathcal L(\Omega_{\eps})\to \mathcal L(\Omega)$ (as we can easily see from the definitions in \eqref{betab}), we obtain the assertion:
$
\lambda_1(\Omega)\geq  \mathcal L(\Omega).
$

\end{proof}


\subsection{Preparatory results}\label{preparatory}

In this section we state the two main technical lemmas; the partition of the annulus $\Omega$ will be defined in Section \ref{decomposition}. 
\smallskip

So let $\Omega=F\setminus\bar G$ be an annulus  as above and let $\beta\in (0,\beta(\Omega)]$. We consider the distance functions:
$$
\rho_1,\rho_2: F\to [0,\infty),
$$
where $\rho_1(x)=d(x, G)$ and $\rho_2(x)=d(x,\bd F)$. Fix a parameter $\beta>0$. As $G$ is convex, with piecewise-smooth boundary, it is well-known that  the equidistants $\{\rho_1=\beta\}$ are $C^1$-smooth curves. 
We say that the parameter $\beta$ is {\it regular}  if the equidistant $\{\rho_2=\beta\}$ is a  piecewise-smooth curve. Following Appendix 2 in \cite{Sa}, we know that the set of regular parameters has full measure in $(0,\beta(\Omega)]$; as a consequence, there exists a sequence of regular parameters $\{\beta_j\}\to\beta(\Omega)$ as $j\to\infty$. 

\nero By using an obvious limiting procedure, from now on we take
$$
\beta=\beta(\Omega)
$$
and can assume that it is a regular parameter, so that $\rho_2=\beta$ is a piecewise-smooth curve. 

\begin{lemme}\label{second} Let  $\Omega=F\setminus \bar G$ be an annulus  in the plane with $F$ and $G$ convex with piecewise-smooth  boundary, and let  $\beta= \beta(\Omega)$  (which, by assumption, is a regular parameter). Then 
$\Omega$ admits a  partition $\{\Omega_1,\dots,\Omega_n\}$ into (overlapping) subdomains $\Omega_k$ with the following properties. 

\parte a $\Omega_k$ is an annulus bounded by two convex piecewise-smooth curves, that is, $\Omega=F_k\setminus \bar G_k$ with $F_k$ and $G_k$ convex, and $G_k$ contains $G$ (see figure in Section \ref{decomposition} below).

\parte b The number $n$ of annuli in the partition can be taken so that:
$$
n\leq\dfrac{2B(\Omega)}{\beta}.
$$
\end{lemme}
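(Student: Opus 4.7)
The plan is to construct the partition explicitly as a family of overlapping annular shells around $G$, indexed by sublevel sets of the distance function $\rho_1(x)=d(x,G)$. For $t>0$, let $U(t)=\{x\in\real 2:\rho_1(x)<t\}$ denote the open $t$-neighborhood of $G$, and set $U(0)=G$. Since $G$ is convex, $U(t)$ is convex for every $t\geq 0$; moreover, for $t>0$ the equidistant $\{\rho_1=t\}$ is a $C^1$-smooth convex curve, the nearest-point projection onto the closed convex set $G$ being single-valued outside $G$. Setting $t_k=(k-1)\beta/2$, I would define, for $k=1,\dots,n$,
$$
F_k=F\cap U(t_{k+2}),\qquad G_k=F\cap U(t_k),\qquad \Omega_k=F_k\setminus\bar G_k,
$$
and take $n$ to be the smallest integer with $(n+1)\beta/2\geq B(\Omega)$; note that $G_1=G$. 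Geometrically $\Omega_k$ is the slice of $\Omega$ whose distance from $G$ lies in the open interval $(t_k,t_{k+2})$: an annular shell of thickness $\beta$ overlapping each of its neighbours in a sub-shell of thickness $\beta/2$.

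For (a), both $F_k$ and $G_k$ are convex as intersections of two open convex sets, and the inclusions $G\subseteq G_k\subseteq F_k$ follow from $U(t_1)\subseteq U(t_k)\subseteq U(t_{k+2})$ together with $G\subseteq F$. The boundary of $F_k$ decomposes as arcs of the $C^1$-equidistant $\{\rho_1=t_{k+2}\}$ together with arcs of $\bd F$, meeting at finitely many transverse crossings; the regularity assumption on $\beta$ guarantees that such crossings are finite in number and non-degenerate, so $\bd F_k$ is convex and piecewise-smooth, and the same reasoning applies to $\bd G_k$ (with the case $k=1$ reducing to the hypothesis on $\bd G$). For (b), any $y\in\bar F$ maximizing $\rho_1$ necessarily lies on $\bd F$, and the segment joining it to its unique nearest point on $\bd G$ is an orthogonal ray contained in $\bar\Omega$ of length $\rho_1(y)$, so $\max_{\bar F}\rho_1\leq B(\Omega)$. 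The intervals $[t_k,t_{k+2}]$ for $k=1,\dots,n$ cover $[0,(n+1)\beta/2]\supseteq[0,B(\Omega)]$ by the choice of $n$, so every point of $\bar\Omega$ lies in some $\bar\Omega_k$ and $\bar\Omega=\bigcup_{k=1}^n\bar\Omega_k$. Finally, the minimality of $n$ yields $n\beta/2<B(\Omega)$, and hence $n\leq 2B(\Omega)/\beta$, as required.

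The main technical point to verify will be the piecewise-smooth convexity claim for $\bd F_k$ and $\bd G_k$ for every $k$. This rests on two ingredients: the classical fact that the equidistants of a closed convex set are $C^1$-smooth convex curves, and the assumption that $\beta(\Omega)$ is a regular parameter (introduced at the start of Section \ref{preparatory}), which rules out tangential intersections between the chosen equidistant and $\bd F$ and ensures only finitely many corners appear. The remainder of the argument is a routine bookkeeping exercise: intersections of convex sets stay convex, the shells cover $\bar\Omega$ because their defining intervals cover $[0,B(\Omega)]$, and the bound on $n$ is a direct arithmetic check.
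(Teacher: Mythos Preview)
Your construction has a genuine gap: the sets $\Omega_k$ you define are \emph{not} annuli once $t_k\geq\beta(\Omega)$, i.e.\ for all $k\geq 3$. Since $\beta(\Omega)=\min_{\partial F}\rho_1$, as soon as $t_k>\beta$ the closed neighbourhood $\bar U(t_k)$ meets $\partial F$ in a nontrivial arc. Then $G_k=F\cap U(t_k)$ is a convex set whose closure shares that arc with $\partial F$, and hence with $\partial F_k$; the difference $\Omega_k=F_k\setminus\bar G_k=\{x\in F:\,t_k<\rho_1(x)<t_{k+2}\}$ is bounded by one arc of $\{\rho_1=t_k\}$, one arc of $\{\rho_1=t_{k+2}\}$, and one or two arcs of $\partial F$, joined into a \emph{single} closed curve. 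Such an $\Omega_k$ is simply connected, not doubly connected, so property (a) fails and the inequality of Theorem~\ref{first} cannot be applied to it. (A secondary issue: the regularity hypothesis in the paper concerns the level set $\{\rho_2=\beta\}$ of the distance to $\partial F$; your construction never uses $\rho_2$, so invoking that hypothesis to control the crossings of $\{\rho_1=t_k\}$ with $\partial F$ is not justified.)

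The paper's construction repairs exactly this defect. It takes shells of full thickness $\beta$ (not $\beta/2$), setting $F_k=\{\rho_1<k\beta\}$, but defines the inner region as
\[
G_k=\{\rho_1<(k-1)\beta\}\cap\{\rho_2>\beta\},
\]
intersecting with the set of points at distance greater than $\beta$ from $\partial F$. This forces $\bar G_k$ to stay in the interior of $F$, so $\bar G_k$ is genuinely surrounded by $F_k$ and each $\Omega_k$ is a true annulus containing $G$. The trade-off is that $\partial G_k$ now has two kinds of arcs (equidistants from $G$ and equidistants from $\partial F$), and it is precisely the regularity of $\{\rho_2=\beta\}$ that makes this inner boundary piecewise smooth. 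The number of pieces is $n=\lceil B(\Omega)/\beta\rceil$, and $n\leq 2B(\Omega)/\beta$ follows from $n-1\geq n/2$ for $n\geq 2$. Your arithmetic for (b) and the argument that $\max_{\bar F}\rho_1\leq B(\Omega)$ are fine; what is missing is the $\{\rho_2>\beta\}$ truncation, without which the pieces cease to be annuli.
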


We estimate the ratio $\frac{\beta}{B}$ of each piece as follows.

\begin{lemme}\label{secondb}  Let $\{\Omega_1,\dots,\Omega_n\}$ be the partition in the previous lemma. 
For all $k=1,\dots,n$ one has the following facts.

 \parte a   $\abs{\bd F_k}\leq \abs{\bd F}$ and  $\beta(\Omega_k)=\beta$.
 
 \parte b  The following estimate holds:
\begin{equation}\label{inb}
\dfrac{\beta(\Omega_k)}{B(\Omega_k)}\geq\dfrac{1}{4}\dfrac{\abs{F}}{D(F)^2},
\end{equation}
where $D(F)$ is the diameter of $F$.

\parte c If $\beta(\Omega)$ is less than the injectivity radius of $\bd F$, then the following simpler lower bound holds for all $k$:
$$
\dfrac{\beta(\Omega_k)}{B(\Omega_k)}\geq \dfrac{1}{\sqrt 2}.
$$
\end{lemme}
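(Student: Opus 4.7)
My approach is to treat the three parts in the order (a), (c), (b), building successively on the geometric structure of the partition from Lemma~\ref{second}. Throughout I will use that each $\Omega_k=F_k\setminus\bar G_k$ with both $F_k\subset F$ and $G_k\supset G$ convex, and that by construction the piece $\Omega_k$ realizes a minimal-width orthogonal ray of $\Omega$.

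For part (a), the inequality $\abs{\bd F_k}\leq\abs{\bd F}$ is a classical monotonicity statement for nested planar convex bodies: the nearest-point projection from $\bd F$ onto $\bd F_k$ is $1$-Lipschitz and surjective, hence length-decreasing. For $\beta(\Omega_k)=\beta$, the inclusion $\Omega_k\subset\Omega$ gives $\beta(\Omega_k)\geq\beta$ at once, and the reverse inequality is built into the construction, since each piece is designed to contain a segment of length exactly $\beta$ joining $\bd G_k$ to $\bd F_k$ orthogonally.

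For part (c), under the hypothesis $\beta(\Omega)<\mathrm{Inj}(\bd F)$, I would argue by a direct Pythagorean comparison. The injectivity-radius condition ensures that the $\beta$-tubular neighborhood of $\bd F$ inside $F$ is foliated regularly by inward normal segments, so the ``outer layer'' of $\Omega_k$ has the structure of a standard collar of width $\beta$. Any orthogonal ray from $\bd G_k$ of length exceeding $\beta\sqrt{2}$ would have to travel a tangential distance greater than $\beta$ along $\bd F_k$ before meeting it, which is incompatible with the design of the piece, in which $\bd G_k$ is at distance $\beta$ from $\bd F_k$ along such rays. A short right-triangle computation then gives $B(\Omega_k)\leq\beta\sqrt{2}$ and hence $\beta(\Omega_k)/B(\Omega_k)\geq 1/\sqrt{2}$.

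Part (b) is the main technical step. The estimate $B(\Omega_k)\leq 4\beta\,D(F)^2/\abs{F}$ should rest on a two-sided area comparison. First, since $\beta(\Omega_k)=\beta$, every orthogonal ray from $\bd G_k$ inside $\Omega_k$ has length at least $\beta$, so by integrating along these rays (or by the tube formula for convex bodies) one gets a lower bound $\abs{\Omega_k}\gtrsim\beta\cdot\abs{\bd G_k}$. Second, using $G_k\subset F$ convex, Cauchy's formula yields the classical bound $\abs{\bd G_k}\leq\pi D(G_k)\leq\pi D(F)$. Third, I would localize at the point of $\bd G_k$ where $B(\Omega_k)$ is achieved, and use convexity of $G_k$ to argue that the ``fan'' of nearby orthogonal rays sweeps a region of area at least a controlled multiple of $\beta\,B(\Omega_k)$; this region must fit inside $F$, so its area is bounded by $\abs{F}$. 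Combining these estimates and absorbing the appropriate powers of $D(F)$ via the perimeter- and diameter-monotonicity for convex inclusions should yield the claimed inequality with constant $1/4$.

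\textbf{Main obstacle.} The delicate point in (b) is the handling of ``corner'' behavior at vertices of $\bd G_k$, where the normal cone opens up and the fan of orthogonal rays does not behave like a smooth foliation. The appearance of the shape factor $D(F)^2/\abs{F}$ (rather than a sharper intrinsic invariant) signals that some efficiency is lost for very elongated $F$, and I expect the proof to require a careful split between ``regular'' subarcs of $\bd G_k$ and vertex cones, together with a simultaneous use of perimeter- and diameter-monotonicity to reduce everything to global invariants of $F$.
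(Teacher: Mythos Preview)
Your treatment of (a) is correct and matches the paper. For (c) your intuition is in the right direction, but the Pythagorean sketch is too loose to be a proof; the paper makes it precise via the vertex classification described below.

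The real gap is in (b). The three ingredients you list do not chain to the target inequality. Your ``fan'' step gives at best $\beta\,B(\Omega_k)\lesssim |F|$, hence $\beta/B(\Omega_k)\gtrsim \beta^2/|F|$, which is not the claimed bound and has $\beta$ where the statement has $D(F)$. The tube lower bound $|\Omega_k|\gtrsim \beta|\partial G_k|$ and Cauchy's bound $|\partial G_k|\leq \pi D(F)$ do not interact with the fan estimate in any way that manufactures the factor $D(F)^2$; as written, the argument simply does not close.

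The paper's route is structurally different and worth knowing. One first observes that on the \emph{smooth} arcs of $\partial G_k$ the two boundary components of $\Omega_k$ are parallel at distance exactly $\beta$, so $\beta(x)/B(x)=1$ there; the whole difficulty is concentrated at the finitely many vertices of $\partial G_k$. These are then split into two types: Type~1, where one incident arc is an equidistant to $\partial G$ and the other an equidistant to $\partial F$; and Type~2, where both incident arcs are equidistants to $\partial F$, i.e.\ the vertex lies on the cut locus of $\partial F$. A short argument with the gradients $\nabla\rho_1,\nabla\rho_2$ shows that at a Type~1 vertex the wedge angle is at most $\pi/2$, giving $\beta/B(p)\geq 1/\sqrt{2}$; under the injectivity-radius hypothesis of (c) no Type~2 vertices occur, which is exactly how (c) is obtained. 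For a Type~2 vertex $p$ one constructs an auxiliary triangle $T$, with one vertex at the far endpoint $q$ realizing $B=B(p)$, such that $F\setminus W(p)\subset T$; one shows $|T|\geq(\sqrt{2}/4)|F|$, and then elementary trigonometry in $T$ yields $\beta/B\geq\sin\psi\geq |F|/(4D(F)^2)$, where $\psi$ is one of the angles of $T$ at $q$ and the last inequality uses that the height of $T$ is at most $D(F)$. Thus the shape factor $D(F)^2/|F|$ arises from the area and height of this triangle, not from a tube or Cauchy argument.
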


The proof of Lemma \ref{second} and Lemma \ref{secondb} involves rather simple geometric constructions, but there are some delicate points to take care of, and will be done in Section \ref{decomposition}.  In fact, these two lemmas make it possible to write $\Omega$ as a union of subset $\Omega_k$ such that the ratio $\frac{\beta(\Omega_k)}{B(\Omega_k)}$ is bounded below, which make the proof of  Theorem \ref{leading} quite easy, as follows.


\subsection{Proof of Theorem \ref{leading}}\label{leadingproof}

We use the partition  $\{\Omega_1,\dots,\Omega_n\}$ of Lemma \ref{second}.  Let $A$ be a closed potential having flux $\Phi^A$ around the inner boundary curve $\bd G$; then, $A$ has the same flux around the inner component of $\Omega_k$, by Lemma \ref{second}a. By Proposition \ref{simple1}a there exists $k\in\{1,\dots,n\}$ such that
\begin{equation}\label{oneovern}
\lambda_1(\Omega,A)\geq \dfrac{1}{n}\lambda_1(\Omega_k,A).
\end{equation}
By Theorem \ref{first} applied to $\Omega=\Omega_k$ we see:
$$
\lambda_1(\Omega_k,A)\geq \dfrac{4\pi^2}{\abs{\bd F_k}^2}\dfrac{\beta(\Omega_k)^2}{B(\Omega_k)^2} d(\Phi^A,{\bf Z})^2.
$$
By b) of Lemma \ref{secondb} we see:
\begin{equation}\label{betab1}
\dfrac{\beta(\Omega_k)^2}{B(\Omega_k)^2}\geq\dfrac{1}{16}\dfrac{\abs{F}^2}{D(F)^4}.
\end{equation}
This, together with the inequality $\abs{\bd F_k}\leq\abs{\bd F}$, gives:
$$
\lambda_1(\Omega_k,A)\geq \dfrac{\pi^2}{4}\cdot\dfrac{\abs{F}^2}{\abs{\bd F}^2D(F)^4}\cdot d(\Phi^A,{\bf Z})^2.
$$
We insert this inequality in \eqref{oneovern} and use the inequality $\frac{1}n\geq \frac{\beta}{2B(\Omega)}$ (see Lemma \ref{second}b) to conclude that 
$$
\begin{aligned}
\lambda_1(\Omega,A)&\geq  \frac{\beta}{2B(\Omega)}\lambda_1(\Omega_k,A)\\
&\geq \dfrac{\pi^2}{8}\cdot\dfrac{\abs{F}^2}{\abs{\bd F}^2D(F)^4}\cdot\dfrac{\beta}{B(\Omega)}\cdot d(\Phi^A,{\bf Z})^2.
\end{aligned}
$$
This proves part a) of Theorem 1. 

If $\beta(\Omega)$ is less than the injectivity radius of $\bd F$ we proceed as before, using the lower bound
$
\frac{\beta(\Omega_k)}{B(\Omega_k)}\geq \frac{1}{\sqrt 2}
$
proved in Lemma \ref{secondb}c. We arrive easily at the inequality:
$$
\lambda_1(\Omega,A)\geq \dfrac{\pi^2}{\abs{\bd F}^2}\frac{\beta(\Omega)}{B(\Omega)} d(\Phi^A,{\bf Z})^2,
$$
which is Theorem 1b).


\subsection{The partition of $\Omega$ and the proof of Lemma \ref{second}}\label{decomposition}
We start by showing the partition on a particular example, see Figure \ref{pieces} below. The initial domain is a triangle $F$ minus a disk $G$ and $\beta=\beta(\Omega)$. We draw the first three pieces $\Omega_1,\Omega_2,\Omega_3$ and then the last one, which is $\Omega_6$ and which coincides with the $\beta$-neighborhood of the exterior boundary $\bd F$ (this is always the case). Note that the pieces overlap, hence the partition is not disjoint.

\begin{figure}[H] 
\begin{center}
 \includegraphics[width=15cm]{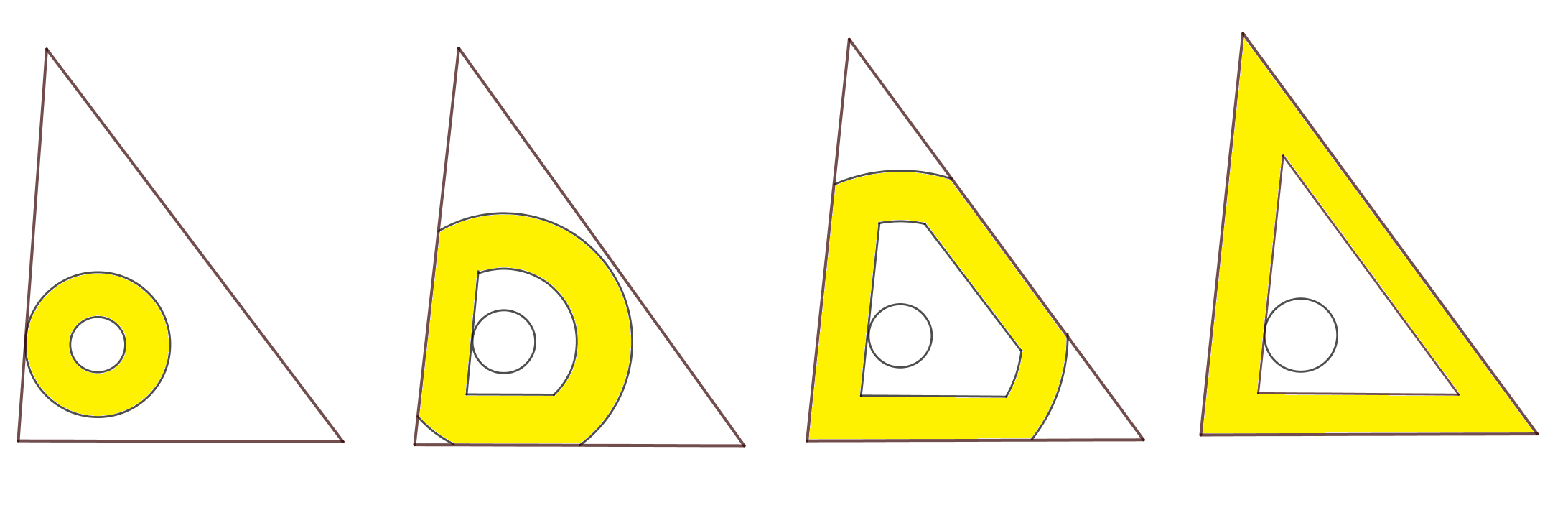}
\caption{\label{pieces}The pieces $\Omega_1, \Omega_2, \Omega_3$ and the last piece $\Omega_6=\{\rho_2<\beta\}$ when the initial domain is the triangle minus the small disk  and $\beta=\beta(\Omega)$}.
\end{center}
\end{figure}
We now proceed to construct the partition in general. Let then $\Omega$ be convex annulus $\Omega=F\setminus\bar G$ as above, and consider the distance functions:
$$
\rho_1,\rho_2: F\to [0,\infty),
$$
where $\rho_1(x)=d(x,G)$ and $\rho_2(x)=d(x,\bd F)=d(x,F^c)$.

\nero At step 1, we let $F_1=\{\rho_1<\beta\}$, $G_1=G$  and $\Omega_1=F_1\setminus \bar G_1$. That is, $\Omega_1$ is simply the subset of $F$ at distance less than $\beta$ to $G$.

\nero At step 2, we let $F_2=\{\rho_1<2\beta\}$ and 
$G_2=\{\rho_1<\beta\}\cap \{\rho_2>\beta\}$ and define $\Omega_2=F_2\setminus \bar G_2$. 

\nero  At the arbitrary step $k$, we let $F_k=\{\rho_1< k\beta\}$ and 
$G_k=\{\rho_1<(k-1)\beta\}\cap \{\rho_2>\beta\}$, and define:
$$
\Omega_k=F_k\setminus \bar G_k.
$$
Observe that for any choice of positive numbers $a,b$ the sets $\{\rho_1<a\}$ and $\{\rho_2< b\}$ are convex. Therefore,  both $F_k$ and $G_k$ are convex; moreover $G_k\subset F_k$ and then $\Omega_k$ is an annulus. This proves part a) of Lemma \ref{second}.

\smallskip

For b) we first prove the following fact:

\medskip

{\bf Fact.} {\it Let $n\doteq \Big[\dfrac{B(\Omega)}{\beta}\Big]$ (the smallest integer greater than or equal to $\frac{B(\Omega)}{\beta}$). Then $F_n=F$ and $G_n=\{\rho_2>\beta\}$. In particular, $\Omega_n=\{\rho_2<\beta\}$ and then, starting from $n$, the sequence $\Omega_n$ stabilizes: $\Omega_n=\Omega_{n+1}=\dots$.}

\smallskip

For the proof we first observe that, from the definition of $B(\Omega)$, we have $F\subseteq \{\rho_1< B(\Omega)\}$; 
then, if we fix $n\geq\frac{B(\Omega)}{\beta}$ we have by definition $F\subseteq F_n$ hence $F=F_n$.  To show that $G_n=\{\rho_2>\beta\}$ it is enough to show:
\begin{equation}\label{rhotwobeta}
\{\rho_2> \beta\}\subseteq \{\rho_1 <(n-1)\beta\}.
\end{equation}
In fact, if not,  there would be a point $x\in F$ such that  $d(x,\bd F)=\rho_2(x)>\beta$ and $\rho_1(x)\geq (n-1)\beta$. Let $y\in\bd G$ be a point at minimum distance to $x$, and prolong the segment from $y$ to $x$ till it hits $\Gamma_{\rm ext}=\bd F$ at the point $z$. It is clear that then $d(y,z)=d(y,x)+d(x,z)>n\beta$. By definition of $B(\Omega)$ we then have:
$$
B(\Omega)\geq d(y,z)>n\beta,
$$
which contradicts the definition of $n$. Hence \eqref{rhotwobeta} holds. 

\medskip

We now prove part b) of Lemma \ref{second}.  Observe that  $\bar\Omega=\cup_{k=1}^n\bar\Omega_k$ and (by the definition of $n$)
$\frac{B(\Omega)}{\beta}\geq n-1$. Since $n-1\geq \frac n2$ for all $n\geq 2$ we see that, for all $n$:
$$
\frac{B(\Omega)}{\beta}\geq \frac n2,
$$
which gives the assertion.

\subsection{Proof of Lemma \ref{secondb}}

We now study the typical piece $\Omega_k=F_k\setminus \bar G_k$  in the partition. Observe that
$$
\bd F_k=\bd_1F_k\cup\bd_2 F_k, \quad \bd G_k=\bd_1G_k\cup\bd_2 G_k,
$$ 
where
$$
\twosystem
{\bd_1F_k=\{\rho_1=k\beta\}\cap\bar F}
{\bd_2 F_k=\{\rho_1\leq k\beta\}\cap\bd F}
\quad
\twosystem
{\bd_1G_k=\{\rho_1=(k-1)\beta\}\cap\{\rho_2\geq\beta\}}
{\bd_2 G_k=\{\rho_1\leq (k-1)\beta\}\cap \{\rho_2=\beta\}}
$$
(some of these boundary pieces may be empty). As the equidistants $\{\rho_1=r\}$ are $C^1-$smooth, and the equidistant $\{\rho_2=\beta\}$ is piecewise smooth, we see that $\bd F_k$ and $\bd G_k$ are both piecewise smooth, hence 

\nero $\bd\Omega_k$ is piecewise smooth. 

\medskip

The inner boundary is $\bd G_k$, it is piecewise smooth with vertices in the set
$$
S=\{\rho_1=(k-1)\beta\}\cap\{\rho_2=\beta\}.
$$

\smallskip

Now we have to estimate the ratio $\beta(\Omega_k)/B(\Omega_k)$ for a fixed $k=1,\dots,n$. Recall the function 
$$
\beta (x):\bd G_k\to\reals
$$
defined in \eqref{betapbp}.
 First notice that the regular parts
$$
\bd_{1,{\rm reg}}F_k=\{\rho_1=k\beta\}\cap F\quad\text{and}\quad 
\bd_{1,\rm reg} G_k=\{\rho_1=(k-1)\beta\}\cap\{\rho_2>\beta\}
$$ 
are parallel, at distance $\beta$ to each other. Hence
\begin{equation}\label{betaequalsb}
\frac{\beta(x)}{B(x)}=1
\end{equation}
at all points $x\in\bd_{1,\rm reg} G_k$. Similarly, the regular sets 
$$
\bd_{2,{\rm reg}}F_k=\{\rho_1< k\beta\} \cap \bd F\quad\text{and}\quad 
\bd_{2, \rm reg} G_k=\{\rho_1<(k-1)\beta\}\cap \{\rho_2=\beta\}
$$
are parallel at distance $\beta$ and $\frac{\beta(x)}{B(x)}=1$ on $\bd_{2, \rm reg} G_k$.

Therefore,  it only remains to control the ratio $\frac{\beta(x)}{B(x)}$ at the vertices of $\bd G_k$, which are finite, say $p_1,\dots, p_m$.  

\smallskip

Each break point $p_j$ gives rise to a corresponding {\it wedge} $W(p_j)\doteq N_G(p_j)\cap\Omega$. 
In Figure \ref{typicalwedge} we enlarge the domain $\Omega_2$ relative to the partition of Figure \ref{pieces} and we show its set of wedges.
In other words, every annulus $\Omega_k$ is made up of strips of constant width $\beta$ and wedges, and we need to control 
$\frac{\beta(x)}{B(x)}$ only at the wedges.

\begin{figure}[H]
\begin{center}  
\includegraphics[width=14cm] {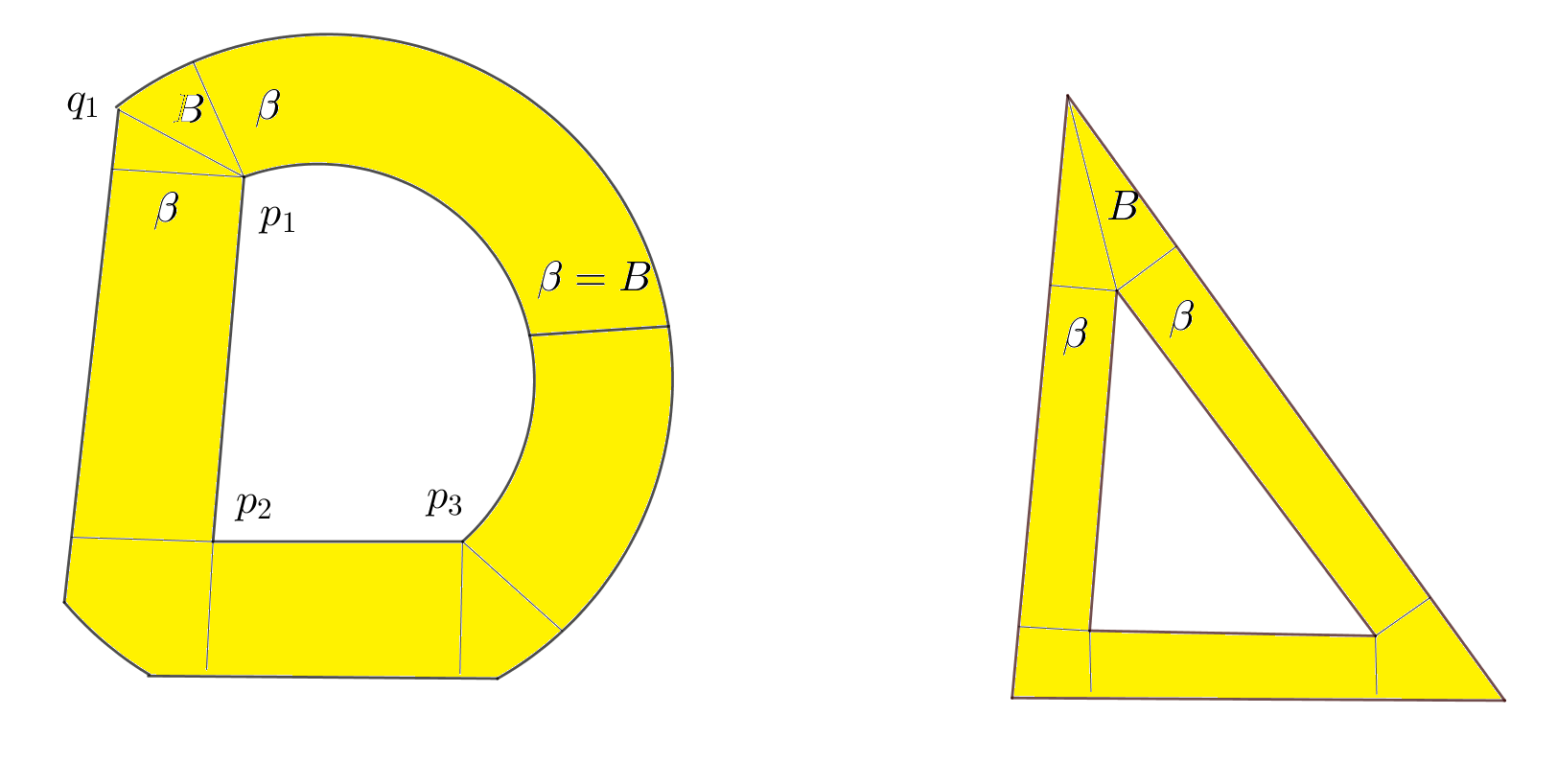}
\caption{\label{typicalwedge}On the left: the piece $\Omega_2$ and its wedges. On the right: the last piece $\Omega_6=\{\rho_2<\beta\}$; the ratio $\frac{\beta}{B}$ is small at the upper wedge, because the angle of the wedge at its break point is near $\pi$.}
\end{center}
\end{figure}

\smallskip
Typically, the ratio $\frac{\beta}{B}$ is small when there are small angles; nevertheless, this ratio is controlled from below by the diameter and the volume of the outer domain, as we will see in the next section.

\smallskip

As $F_k$ is a convex subset of $F$, we see that $\abs{\bd F_k}\leq\abs{\bd F}$. Now it is clear from the construction that $\beta(x)\geq \beta$ for all $x\in\bd F_k$; moreover, the inequality is attained. Therefore
$$
\beta(\Omega_k)=\beta
$$
for all $k$. This proves part a) of Lemma \ref{secondb}.


\subsection{End of proof of Lemma \ref{secondb}} The estimate $\frac{\beta}{B}$ on the wedges of the generic piece $\Omega_k$ will be a consequence of Lemma \ref{types} below.  

We recall that the cut-locus of $\bd F$ is the closure of the set of all points which can be joined to $\bd F$ by at least two minimizing segments;  moreover, the injectivity radius of $\bd F$ is the minimum distance of $\bd F$ to the cut-locus. If $\bd F$ is smooth, its injectivity radius is positive. Finally the distance function $d(\cdot,\bd F)$ is smooth outside the cut-locus.

Then, we fix a piece $\Omega_k$ and recall that $\beta(\Omega_k)=\beta$. 
For simplicity we write $B(\Omega_k)=B$ and recall that, by definition, we have $B\geq\beta$.

\smallskip

Let $\{p_1,\dots,p_n\}$ be the vertices of $\bd G_k$. For $p$ in this set, write $p=\gamma_1\cap\gamma_2$, where $\gamma_1,\gamma_2$ are the arcs concurring at $p$. Note that either $\gamma_j$ is an equidistant to $\bd G$, that is, is a subset of $\rho_1=(k-1)\beta$, and in that case we say that $\gamma_j$ is {\it parallel to $\bd G$}, or $\gamma_j$ is an equidistant to $\bd F$, that is, is a subset of $\rho_2=\beta$, and in that case we say that $\gamma_j$ is {\it parallel to $\bd F$}. There are two possibilities:

\smallskip

{\bf Type 1.} The vertex $p=\gamma_1\cap\gamma_2$ where $\gamma_1$ is parallel to $\bd G$ and $\gamma_2$ is parallel to $\bd F$;

\smallskip

{\bf Type 2.} $\gamma_1$ and $\gamma_2$ are both parallel to $\bd F$.

\smallskip

Note that the second type corresponds to the situation where the vertex $p$ is a point of the cut-locus of $\bd F$. The situation where $\gamma_1$ and $\gamma_2$ are both parallel to $\bd G$ does not occur, because then $p$ would belong to the cut locus of $\bd G$; however the cut-locus of a convex domain is always contained in the interior of the domain; as $p$ is outside $G$  this is impossible. 

\nero For the partition in the example  and its piece $\Omega_2$ (see Figure 5), the vertices $p_1$ and $p_3$ are of type 1, while the vertex $p_2$ is of type 2. 

\begin{lemme}\label{types}

\parte a If $p$ is of type 1 then the interior angle of $G_k$ at $p$ is larger than or equal to $\frac{\pi}2$, hence the angle of the wedge $W(p)$ at $p$ is at most $\frac {\pi}2$. Consequently,
$$
\frac{\beta}{B(p)}\geq \frac{1}{\sqrt 2}.
$$

\parte b If $p$ is of type 2, then $p$ is in the cut-locus of $\bd F$ and one has:
$$
\frac{\beta}{B(p)}\geq\dfrac{1}{4}\dfrac{\abs{F}}{D(F)^2}.
$$

\parte c If $\beta=\beta(\Omega)$ is less than the injectivity radius of $\bd F$ then the estimate in a) will hold at all vertices of the decomposition.

\end{lemme}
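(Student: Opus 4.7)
The plan is to prove (a), (b), (c) by exploiting the specific geometry at the vertex $p$ and converting an angle bound on the wedge $W(p)$ into a bound on $B(p)$. The universal tool, valid for both vertex types, is that $\rho_1$ is convex on $F$ while $\rho_2$ is concave on $F$ (both follow from convexity of $G$ and $F$); consequently, along any ray $t\mapsto p+tv$ one has $\rho_1(p+tv)\geq\rho_1(p)+t\langle v,\nabla\rho_1(p)\rangle$ and $\rho_2(p+tv)\leq\rho_2(p)+t\langle v,\nabla\rho_2(p)\rangle$, so the exit time $T(v)$ of the ray from $\Omega_k$ satisfies $T(v)\leq\beta/\max(\langle v,\nabla\rho_1(p)\rangle,\langle v,-\nabla\rho_2(p)\rangle)$. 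Parametrising $v$ by its angle $\phi\in[0,\alpha]$ from the bounding direction of the normal cone and minimising the resulting expression at $\phi=\alpha/2$ yields the master inequality $B(p)\leq\beta/\cos(\alpha/2)$, where $\alpha$ denotes the wedge angle at $p$.

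For part (a), let $y_1\in\bd G$ and $y_2\in\bd F$ be the perpendicular feet from $p$, so that $\nabla\rho_1(p)=(p-y_1)/|p-y_1|$ and $\nabla\rho_2(p)=(p-y_2)/|p-y_2|$. The identity $\beta=\beta(\Omega)=d(\bd G,\bd F)$ (a consequence of convexity of both domains) forces $\rho_2(y_1)\geq\beta$; hence $y_1$ lies in the convex set $\{\rho_2\geq\beta\}$, whose outward normal at the boundary point $p$ is $-\nabla\rho_2(p)$. The supporting-line inequality $\langle y_1-p,\nabla\rho_2(p)\rangle\geq 0$, together with the identity $y_1-p=-(k-1)\beta\,\nabla\rho_1(p)$, rearranges to $\langle\nabla\rho_1(p),\nabla\rho_2(p)\rangle\leq 0$. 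Since the two outward normals to $\bd G_k$ at $p$ are $\nabla\rho_1(p)$ and $-\nabla\rho_2(p)$, their angle equals the wedge angle $\alpha$, and $\cos\alpha=-\langle\nabla\rho_1,\nabla\rho_2\rangle\geq 0$ gives $\alpha\leq\pi/2$, hence interior angle of $G_k$ at $p$ at least $\pi/2$. The master inequality then yields $B(p)\leq\beta\sqrt2$ and $\beta/B(p)\geq 1/\sqrt2$. Part (c) follows immediately: if $\beta<{\rm Inj}(\bd F)$, the equidistant $\{\rho_2=\beta\}$ avoids the cut-locus of $\bd F$ and is $C^1$-smooth, so $\bd G_k$ admits only type-1 vertices.

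For part (b), the vertex $p$ lies on the cut-locus of $\bd F$ because $\rho_2$ is smooth off the cut-locus. The master inequality $B(p)\leq\beta/\cos(\alpha/2)$ still holds (the super-differential of $\rho_2$ at $p$ is spanned by the gradients $\nabla\rho_2(p^\pm)$ on the two sides), but $\alpha$ may approach $\pi$ so the bound degenerates. To compensate, the convex wedge $W(p)=(p+N_{G_k}(p))\cap F_k$ contains the triangle $T=\triangle p\,y_2^-\,y_2^+$, giving $\tfrac12\beta^2\sin\alpha=|T|\leq|F|$ and hence $\sin\alpha\leq 2|F|/\beta^2$. Combining this with the trivial diameter bound $B(p)\leq D(F_k)\leq D(F)$ and splitting on the wedge angle should produce the claim: for $\alpha\leq\pi/2$ the master inequality already yields $\beta/B(p)\geq 1/\sqrt2\geq\tfrac14|F|/D(F)^2$ (using $|F|\leq\tfrac\pi4 D(F)^2$), while for $\alpha>\pi/2$ the diameter bound and the area constraint together must produce the factor $|F|/D(F)^2$. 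The main obstacle is the regime $\alpha$ close to $\pi$: here $B(p)\leq D(F)$ is the binding bound, and extracting the factor $|F|/D(F)^2$ requires combining it with the area constraint via the convexity of $W(p)$ and a careful accounting of whether the maximising point $Q(p,v^*)$ lies on $\bd F$ or on the equidistant $\{\rho_1=k\beta\}$.
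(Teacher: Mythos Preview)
Your treatment of parts (a) and (c) is correct and in fact cleaner than the paper's. The ``master inequality'' $B(p)\leq\beta/\cos(\alpha/2)$, obtained from the convexity of $\rho_1$ and concavity of $\rho_2$, is a nice unified device; the paper instead argues $\langle\nabla\rho_1,\nabla\rho_2\rangle\leq 0$ by a short contradiction (moving into $G_k$ along $-\nabla\rho_1$ would decrease $\rho_2$ below $\beta$) and then bounds $B(p)$ by enclosing the wedge in a quadrilateral. Your supporting-hyperplane argument for the same inner-product inequality is equally valid. Part (c) is identical in both approaches.

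Part (b), however, has a genuine gap. Your area inequality $\tfrac12\beta^2\sin\alpha\leq|F|$ is oriented the wrong way: when $\alpha\to\pi$ it becomes vacuous, and you yourself acknowledge that ``extracting the factor $|F|/D(F)^2$'' in this regime still ``requires a careful accounting'' that is never carried out. Neither your master inequality (which blows up as $\alpha\to\pi$) nor the crude bound $B(p)\leq D(F)$ yields $\beta/B(p)\geq\tfrac14|F|/D(F)^2$; the latter would need $\beta\geq\tfrac14|F|/D(F)$, which is false in general.

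The paper's idea for (b) is essentially the reverse of yours. Rather than a triangle \emph{inside} the wedge, it builds a large triangle $T$ with apex at the point $q$ realising $B(p)$ and sides tangent to $\bd F$ at the two feet of $p$, so that $F\setminus W\subset T$; a short computation gives $|T|\geq\tfrac{\sqrt2}{4}|F|$. The angles $\psi,\alpha$ of $T$ at $q$ are small precisely when the wedge angle is near $\pi$, and the law of sines on the sub-triangles yields $\beta/B\geq\sin\psi$. Combining $|T|\leq D(F)^2\tan\psi$ with the lower bound on $|T|$ then gives $\sin\psi\geq|F|/(4D(F)^2)$. So the missing ingredient is this ``exterior'' triangle containing most of $F$, which converts the area of $F$ into a lower bound on an angle, rather than an upper bound.
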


Since the lower bound in b) is always weaker than that in a), we have  b) at all vertices of $\bd G_k$. It is clear that Lemma \ref{types} completes  the proof of Lemma \ref{secondb}.

\smallskip

{\bf Proof of Lemma \ref{types}a)} If $p$ is of type 1 then $p$ is not on the cut-locus of $\bd F$, hence $\nabla\rho_2$ exists and is a well-defined unit vector in a neighborhood of $p$. Note that $\nabla\rho_2(p)$ points in the direction where the distance to $\bd F$ increases (obviously an analogous observation holds for $\nabla\rho_1$).   Now observe that the angle of the wedge $W(p)$ is the angle between the vectors $\nabla\rho_1$ and $-\nabla\rho_2$ (see Figure \ref{picture8a}).

\begin{figure}[H]
\begin{center}
\includegraphics[width=7cm]{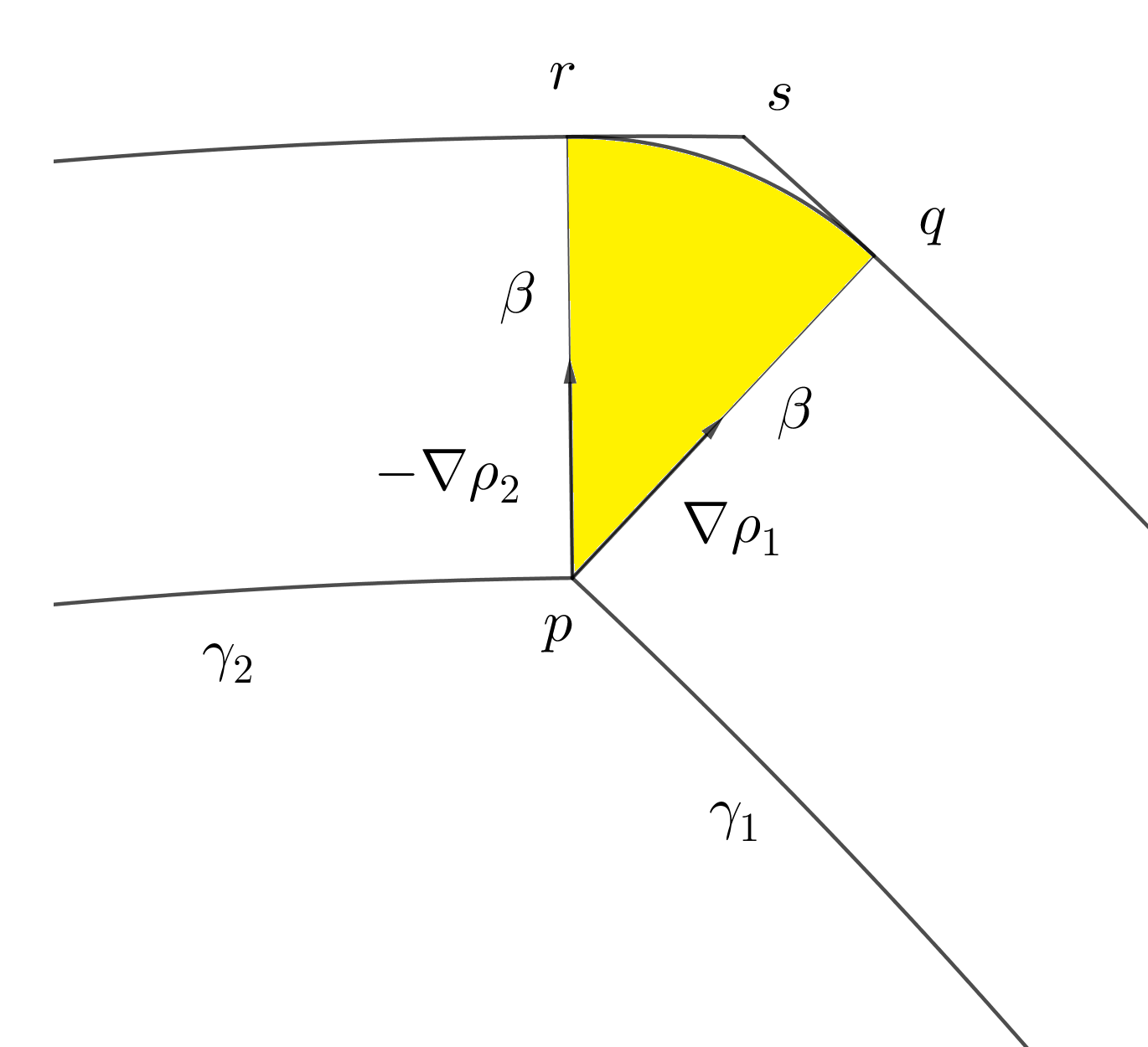}
\caption{\label{picture8a}Proof of Lemma \ref{types}a}
\end{center}
\end{figure}

Hence, it is enough to show that the quantity
$$
\scal{\nabla\rho_1(p)}{\nabla\rho_2(p)}=c(p)
$$
is non-positive. 
Assume on the contrary that $c(p)>0$. We let $\alpha(t)$ denote the segment which minimizes distance from $p$ to $\bd G$ (parametrized by arc-length); hence $\alpha'(t)=-\nabla\rho_1(\alpha (t))$. We let $f(t)$ be the function which measures distance from $\alpha(t)$ to $\bd F$, so that:
$$
f(t)= \rho_2(\alpha(t)).
$$
Now $f'(t)=\scal{\nabla\rho_2(\alpha(t))}{\alpha'(t)}=-\scal{\nabla\rho_2(\alpha(t))}{\nabla\rho_1(\alpha(t)}$. In particular, 
$$
f'(0)=-c(p)<0.
$$
As $f(0)=\beta$, this means that  for small $t$ one has $\rho_2(\alpha(t))<\beta$, but this impossible because $\alpha(t)\in G_k$, and all points of $G_k$ are, by definition, at distance at least $\beta$ to $\bd F$. 

\smallskip

Hence $c_p\leq 0$ and the angle of the wedge at $p$ is at most $\frac{\pi}{2}$. Now,  the wedge $W(p)$ is contained in the polygon with vertices $p,q,s,r$ as in the picture, hence $B(p)\leq d(p,s)\leq \sqrt 2\beta$ because the angle at $p$ is at most $\frac{\pi}2$, the angles at $r$ and $s$ are $\frac{\pi}2$, and $d(p,r)=d(p,q)=\beta$.

\smallskip

{\bf Proof of b)}. Let $p$ be a vertex of type 2: then, the two arcs concurring at $p$ are parallel to $\bd F$, and  $p$ belongs to the cut locus of $\bd F$. The boundary of the wedge $W(p)$ is made of two distinct segment of the same length $\beta$ minimizing distance to $\bd F$. Then, $W(p)$ is contained in a wedge of the last member of the partition, that is, $\{\rho_2<\beta\}$. As $B(p)$ depends only on $W(p)$, we could as well estimate the ratio $\frac{\beta}{B(p)}$ by estimating the corresponding ratio for the wedges of $\{\rho_2<\beta\}$, which will allow to express $\frac{\beta}{B(p)}$ in terms of the geometry of $\{\rho_2<\beta\}$, hence, in terms of the geometry of $F$.

The relevant picture is shown below (see Figure \ref{picturetypical}),
in which we evidence such an edge $W$ (dark shadowed in the picture):  it has its vertex in the point $p$ of $\rho_2=\beta$;  we let $q\in W$ be a point such that $d(p,q)=B$. We omit to draw the inner boundary as it will play no role in the proof. 

\smallskip

Let $T$ be the triangle with dotted boundary, with a vertex in $q$ and such that 
$F\setminus W\subseteq T$.  As $\phi+\gamma$ is the exterior angle at a vertex of the piecewise-smooth curve $\rho_2=\beta$, we see that $\phi+\gamma\leq \pi$.

\begin{figure}[H]
\begin{center}
 \includegraphics[width=8cm]{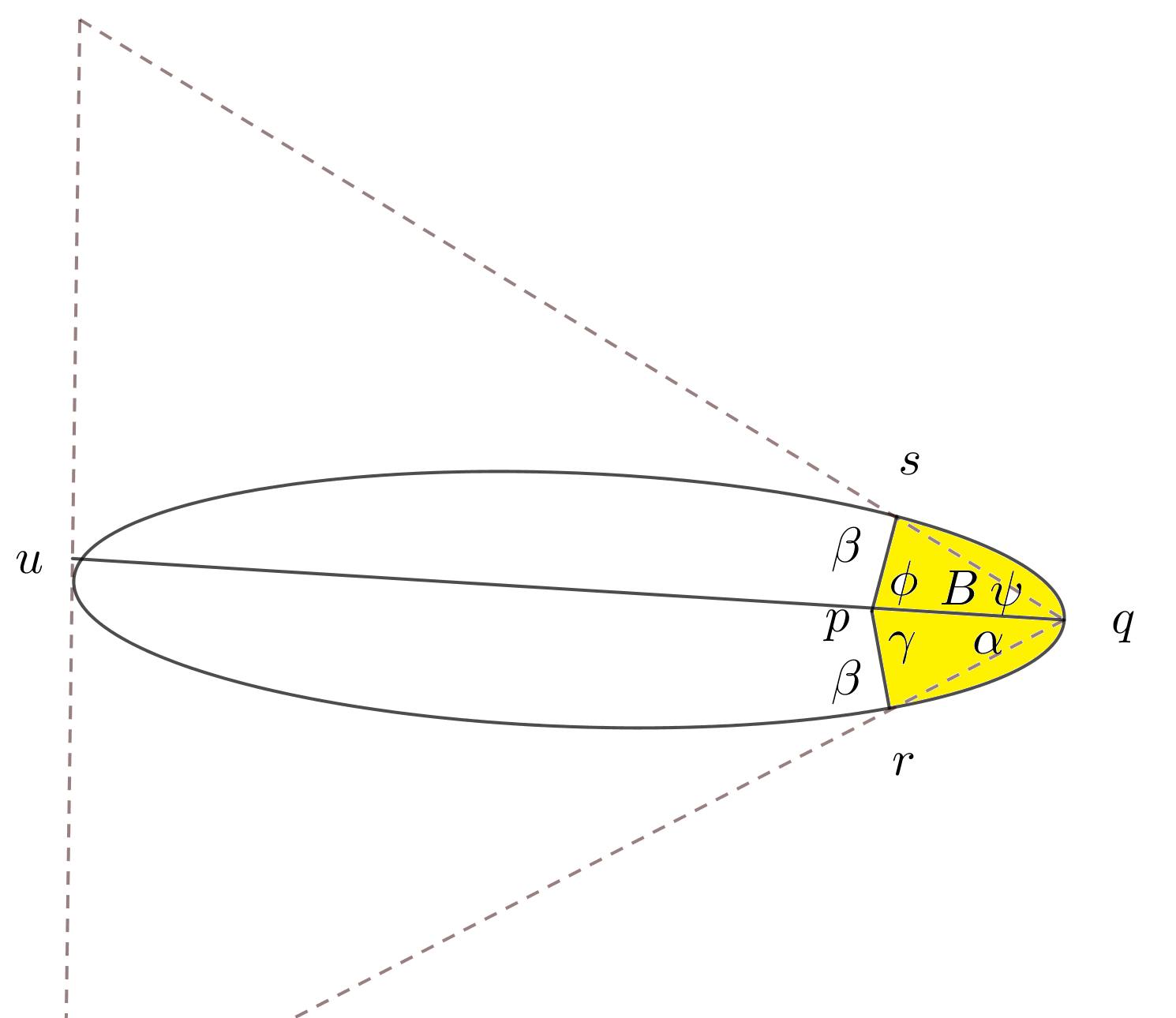}
  \includegraphics[width=6cm]{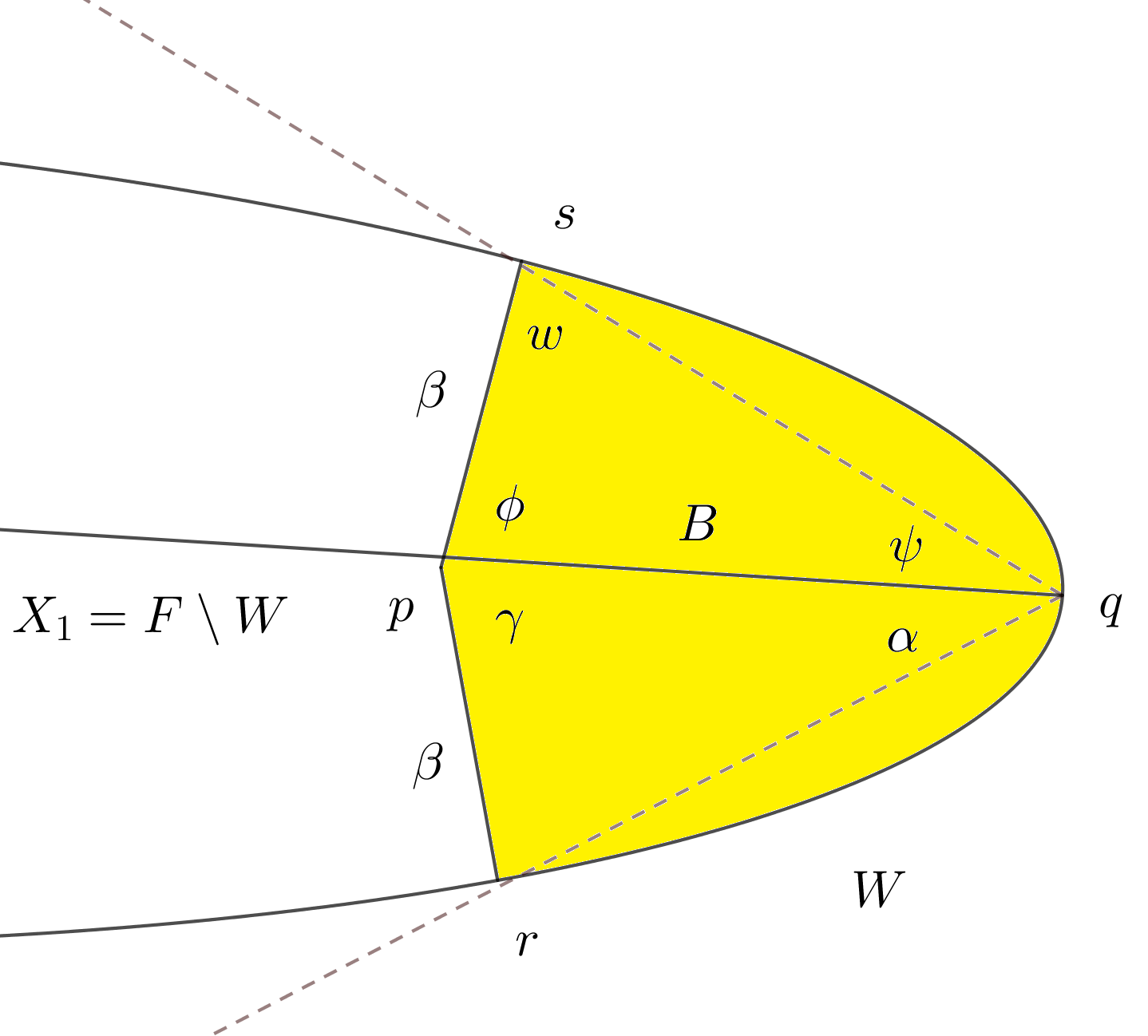}
\caption{\label{picturetypical}Estimate of $\beta/B$ on a typical wedge  }
\end{center}
\end{figure}


{\it Each of the angles $\psi$ and $\alpha$ is less than $\frac{\pi}{2}$}.  
Consider the circle with center $p$ and radius $\beta$. If $q$ is inside this circle then $B<\beta$ which is impossible. Then $q$ is outside the circle; $\alpha$ and $\psi$ are, each, less then the corresponding angles at the vertex $q'$ obtained by projecting $q$ on the circle. It is clear that each of these two angles is less than $\pi/2$.

\smallskip

Let $w$ be the angle at the vertex $s$. Then 
\begin{equation}\label{sinpsi}
\dfrac{\beta}{\sin\psi}=\dfrac{B}{\sin w}\geq B,
\end{equation}
and similarly
$
\frac{\beta}{\sin\alpha}\geq B.
$
If $\psi\geq\frac{\pi}4$ or $\alpha\geq\frac{\pi}4$ then $\frac{\beta}{B}\geq\frac{1}{\sqrt 2}$ and we are finished because $\frac{1}{\sqrt 2}>\frac{\abs{F}}{4D(F)^2}$.

\nero Hence we can assume from now on $\alpha\leq \psi\leq\frac{\pi}4$.

\begin{lemme}\label{gbone} In the above notation we have
$
\abs{T}\geq \dfrac{\sqrt 2}{4}\abs{F}.
$
\end{lemme}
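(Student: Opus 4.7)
My plan is based on the fundamental set inclusion $F \subseteq T \cup W$, which follows immediately from the hypothesis $F \setminus W \subseteq T$ and yields
\[
\abs{F} \leq \abs{T} + \abs{W}.
\]
Hence the lemma will follow once I bound $\abs{W}$ from above either by a multiple of $\abs{F}$ (it suffices to show $\abs{W} \leq (1 - \tfrac{\sqrt 2}{4})\abs{F}$) or directly against $\abs{T}$ (equivalently, $\abs{W} \leq (2\sqrt 2 - 1)\abs{T}$).

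The essential geometric input is the standing hypothesis $\alpha \leq \psi \leq \pi/4$. Seen from the vertex $q$, the two segments $ps$ and $ps'$ bounding the wedge subtend angles $\psi$ and $\alpha$, so $W$ is contained in a sector at $q$ of opening $\psi + \alpha \leq \pi/2$ directed toward $p$. By construction the triangle $T$ shares the vertex $q$, its two sides from $q$ run through $s$ and $s'$ (as shown in Figure \ref{picturetypical}) before closing off on the far side of the convex set $F$; thus $T$ is a ``large'' region extending across $F$ outside the wedge, sharing with the sector containing $W$ the two lines $qs$ and $qs'$.

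To carry out the comparison I would decompose $W$ into the triangle $pss'$ (of area $\tfrac12 \beta^2 \sin(\phi + \gamma) \leq \tfrac12 \beta^2$) plus the convex ``cap'' bounded by the chord $ss'$ and the arc of $\bd F$ from $s$ to $s'$. A key observation is that, under the angle constraint, $q$ lies on the opposite side of the line $ss'$ from $p$, namely inside this cap; hence the cap and the sub-triangle $qss'$ of $T$ overlap substantially, and by convexity of $F$ a large portion of the cap lies in $T$ and does not contribute to $\abs{W} \setminus \abs{T}$. Using the law of sines \eqref{sinpsi} to express $d(q,s), d(q,s')$ explicitly in terms of $\beta$ and the angles, and combining with the constraints $\phi + \gamma \leq \pi$ and $\alpha, \psi \leq \pi/4$, one obtains an explicit lower bound for $\abs{qss'} \leq \abs{T}$ and the desired estimate follows from elementary trigonometry.

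The main obstacle I anticipate is the careful accounting of the cap region: a naive argument (say, a half-turn through $q$ that sends $W$ to a congruent region $\widetilde W$) would in fact yield the stronger bound $\abs{F} \leq 2\abs{T}$, but $\widetilde W$ may not fit entirely inside $T$ near the far boundary of $F$. The weaker constant $\tfrac{\sqrt 2}{4}$ leaves room for this defect, and I expect its precise form to emerge from the worst-case configuration $\alpha = \psi = \pi/4$ (where $B = \beta(\sin\phi + \cos\phi)$ is forced to be comparable to $\beta$), with the extremal $\sqrt 2$ stemming from $\sin(\pi/4)$ in the sine-rule bound.
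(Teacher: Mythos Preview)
Your proposal remains a plan, not a proof: you identify the inequality $\abs{F}\le\abs{T}+\abs{W}$ and then list obstacles without resolving them. More seriously, the target you set yourself, $\abs{W}\le(2\sqrt2-1)\abs{T}$, is not what is needed and can fail. Indeed, $W$ and $T$ overlap (the quadrilateral with vertices $p,s,q,r$ lies in both), so $\abs{F}\le\abs{T}+\abs{W}$ has slack; in the extremal configuration where $\abs{F\setminus W}\to 0$ one gets $\abs{T}\approx\tfrac{\sqrt2}{4}\abs{W}$, i.e.\ $\abs{W}\approx 2\sqrt2\,\abs{T}>(2\sqrt2-1)\abs{T}$. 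So the route ``bound $\abs W$ by a constant times $\abs T$'' cannot reach the stated constant without explicitly subtracting the overlap.

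The paper's argument bypasses all cap analysis. Two elementary facts do the work. First, $W$ sits inside the union of two parallelograms with sides $\beta$ and $B$, so $\abs W\le 2B\beta$. Second, from $\psi,\alpha\le\pi/4$ and $\phi+\gamma\le\pi$ one gets $\pi/4\le\phi,\gamma\le3\pi/4$, hence $\sin\phi,\sin\gamma\ge1/\sqrt2$; the quadrilateral $P_2$ with vertices $p,s,q,r$ then has area $\tfrac12 B\beta(\sin\phi+\sin\gamma)\ge\tfrac{\sqrt2}{2}B\beta\ge\tfrac{\sqrt2}{4}\abs W$. Now set $P_1=T\setminus P_2$ and $X_1=F\setminus W$. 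Since $P_2\subseteq W$, one has $X_1\subseteq T\setminus P_2=P_1$, so $\abs{P_1}\ge\abs{X_1}$. Summing,
\[
\abs T=\abs{P_1}+\abs{P_2}\ge\abs{X_1}+\tfrac{\sqrt2}{4}\abs W\ge\tfrac{\sqrt2}{4}\bigl(\abs{X_1}+\abs W\bigr)=\tfrac{\sqrt2}{4}\abs F.
\]
The missing ideas in your sketch are precisely the crude bound $\abs W\le 2B\beta$ and the angle bounds on $\phi,\gamma$; once you have those, no cap or overlap bookkeeping is needed.
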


\begin{proof} We first remark that $\frac{\pi}{4}\leq\phi\leq\frac{3\pi}{4}$. In fact,  we have $\psi\leq\frac{\pi}4$ and $w\leq \frac{\pi}2$ so that 
$
\phi\geq\frac{\pi}4.
$
On the other hand, the same argument applies to $\gamma$, that is, $\gamma\geq\frac{\pi}4$. Therefore, as $\phi+\gamma\leq \pi$ we conclude
$
\phi\leq\frac{3\pi}4.
$
The same bounds are satisfied by $\gamma$.

\medskip

As $W$ is contained in the union of two parallelograms with sides $\beta$ and $B$ we see:
\begin{equation}\label{bbeta}
\abs{W}\leq 2B\beta.
\end{equation}
We set $X_1=F\setminus W$; we also let $P_2$  be the convex polygon with vertices $p, q, r, s$ and $P_1=T\setminus P_2$ (see Figure \ref{polygons}).

\begin{figure}[H]
\begin{center}
\includegraphics[width=6cm]{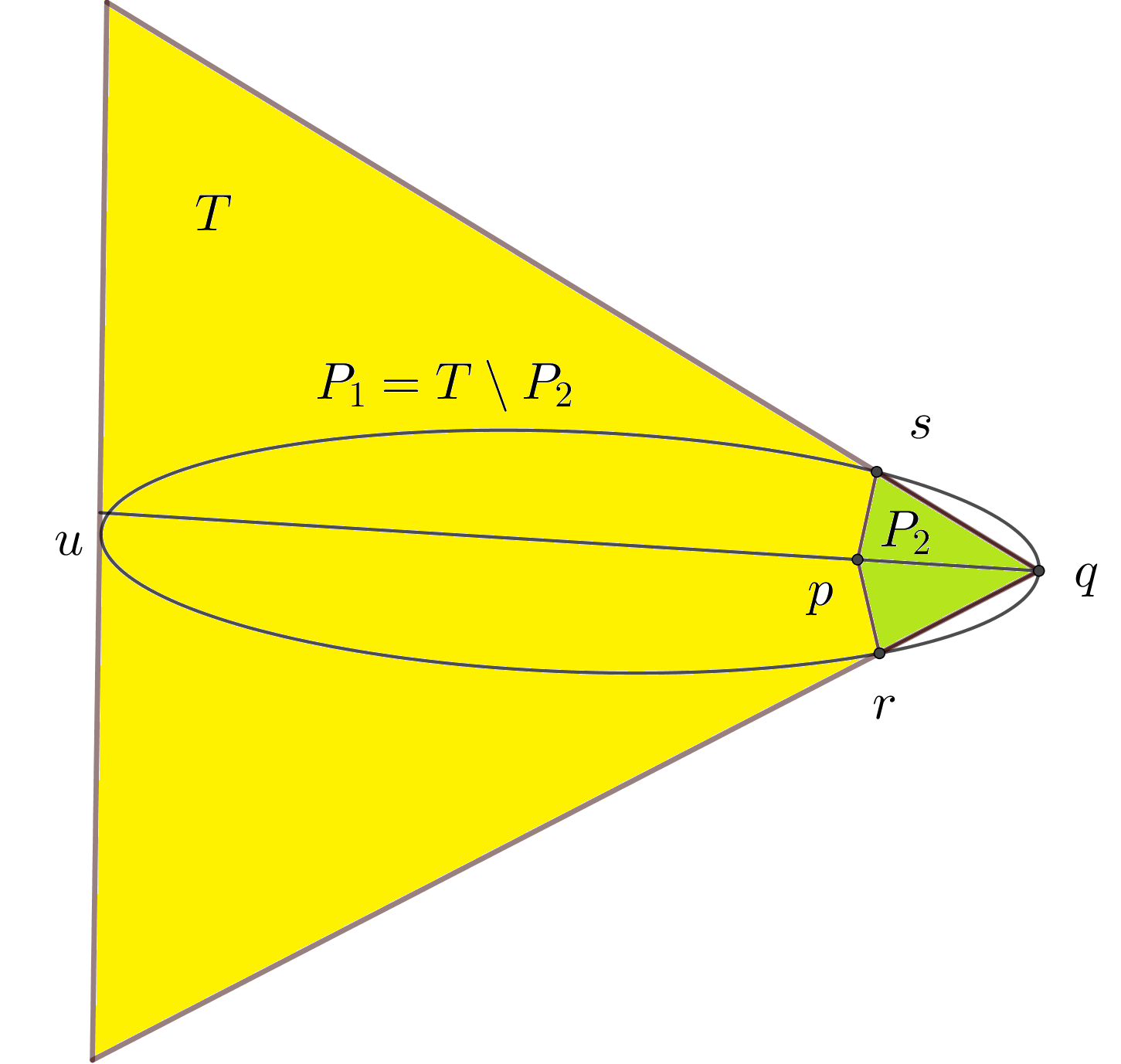}
\caption{\label{polygons}The polygons $P_1$ and $P_2$.}
\end{center}
\end{figure}

We have disjoint unions:
$$
\twosystem
{F=X_1\cup W}
{T=P_1\cup P_2}
$$
We will show that
$$
\twosystem
{\abs{P_1}\geq \abs{X_1}\geq \dfrac{\sqrt 2}{4}\abs{X_1}}
{\abs{P_2}\geq \dfrac{\sqrt 2}{4}\abs{W}}
$$
and the assertion will follow by summing up the two inequalities. 
Now the first inequality is obvious, because $X_1\subseteq P_1$. 
By the bounds for $\phi$ and $\gamma$ we see that 
$\sin\phi$ and $\sin\gamma$ are both, at least, $\frac{1}{\sqrt 2}$. Then:
$$
\begin{aligned}
\abs{P_2}&= \dfrac 12 B\beta\sin\phi+\dfrac 12 B\beta\sin\gamma\\
&\geq \dfrac{\sqrt 2}{2}B\beta
\end{aligned}
$$
Combining the two estimates we see that
$
\abs{P_2}\geq \dfrac{\sqrt 2}{4}\abs{W}
$
as asserted.
\end{proof}


\medskip

{\bf End of proof of Lemma \ref{secondb}.} Refer to Figure \ref{polygons}. We can assume that $\alpha\leq\psi\leq\frac{\pi}{4}$. We let $\delta$ be the length of the segment joining $q$ and $u$ (which meets the side opposite to $q$ orthogonally, by definition), so that:
$$
\abs{T}=\frac 12\delta^2(\tan\alpha+\tan\psi)\leq \delta^2\tan\psi.
$$
The assumptions give $\tan\psi\leq\sqrt 2\sin\psi$, so that
$
\abs{T}\leq \sqrt 2\delta^2\sin\psi.
$
Using the lower bound for $\abs T\geq \frac{\sqrt 2}{4}\abs{F}$ proved before, we have
$
\sin\psi\geq \dfrac{\abs{F}}{4\delta^2}
$
and then, from \eqref{sinpsi}:
$$
\frac{\beta}{B}\geq\sin\psi\geq \dfrac{\abs{F}}{4\delta^2}\geq \dfrac{\abs{F}}{4D(F)^2},
$$
the last inequality holding because evidently $\delta\leq D(F)$. This proves Lemma \ref{types}b and, with it, Lemma \ref{secondb} is completely proved.


\subsection{Example showing sharpness} \label{simpleexample}This example is taken from \cite{CS1}, we repeat it below for the sake of clarity. Its scope is to show that the inequality of Theorem \ref{leading} is sharp in $\frac{\beta}B$.

\smallskip

We take $F$ to be the rectangle $[-4,4]\times [0,4]$, $G_{\epsilon}= [-3,3]\times [\epsilon,2]$ and consider the doubly connected domain:
$$
\Omega_{\epsilon}=F\setminus \bar G_{\epsilon}.
$$
We refer to the picture in the Introduction. 
We let $A$ be any closed $1$-form. As a direct consequence of the gauge invariance of the magnetic Laplacian, it is proved in \cite{CS1} that, for any planar domain $\Omega$ one has:
\begin{equation}\label{excision}
\lambda_1(\Omega,A)\leq \nu_1(D),
\end{equation}
where $D$ is any closed, simply connected subdomain of $\bar\Omega$, and where $\nu_1(D)$ denotes the first eigenvalue of the usual Laplacian with Neumann boundary conditions on $\bd D\cap\bd\Omega$ and with Dirichlet boundary conditions on $\bd D\cap\Omega$. 

\medskip

Given our choice of $\Omega_{\epsilon}$, we remove from it the rectangle 
$(-1,1)\times (0,\epsilon)$ to get the simply connected subdomain called $D_{\epsilon}$. 
We estimate $\nu_1(D_{\epsilon})$ by taking the test-function as follows:
$$
\phi(x,y)=\threesystem
{1\quad\text{on the complement of $[-2,-1]\times[0,\epsilon]\cup[1,2]\times[0,\epsilon]$ }}
{x-1\quad\text{on $[1,2]\times[0,\epsilon]$}}
{1-x\quad\text{on $[-2,-1]\times[0,\epsilon]$}}
$$
It is readily seen that $\int_{D_{\epsilon}}\abs{\nabla\phi}^2=2\epsilon$, while 
$\int_{D_{\epsilon}}\phi^2\geq C>0$ (note that $C>20$). Therefore:
$$
\nu_1(D_{\epsilon})\leq \dfrac{\epsilon}{10}.
$$
Given \eqref{excision} we conclude that:
\begin{equation}\label{upperex}
\lambda_1(\Omega_{\epsilon},A)\leq \dfrac{\beta(\Omega_{\epsilon})}{10}=\dfrac{\eps}{10}
\end{equation}

\smallskip

Applying  our lower bound in Theorem \ref{leading} to this case, we have
$$
\beta=\epsilon;\ B=\sqrt 5;\ \vert F\vert =32;\ \vert \partial F\vert = 24;\ D(F)=4\sqrt 5,
$$
and we obtain
\begin{equation}\label{lowerex}
\lambda_1(\Omega_{\epsilon},A)\ge \frac{\pi^2}{360\sqrt 5}d(\Phi^A,{\bf Z})^2 \,\epsilon.
\end{equation}
We now observe that the minimum width $\beta(\Omega_{\epsilon})=\epsilon$, by construction, and that $B(\Omega_{\eps})$ is bounded above by $4$. Taking into account \eqref{upperex} and \eqref{lowerex} we see that 
$\lambda_1(\Omega_{\eps})$ goes to zero proportionally to $\eps\sim \frac{\beta(\Omega_{\eps})}{B(\Omega_{\eps})}$.

\section{Proof of Theorem \ref{nholes}}\label{sectiontwobis}

Let $\Omega$ be an $n$-holed domain, which we write:
$
\Omega=F\setminus (\bar G_1\cup\dots\cup\bar G_n),
$
with $F,G_1,\dots,G_n$ smooth, open and convex. 

From now on we denote $\Gamma_j=\bd G_j$. The idea is to use a suitable partition of $\Omega$ by annuli  $\Omega_j$ whose boundary is either a piece of $\bd F$ or is an equidistant curve from two interior boundary  curves; each $\Omega_j$ is an annulus 
$\Omega_j=F_j\setminus G_j$ with piecewise-smooth exterior boundary $\bd F_j$ which is star-shaped with respect to $\Gamma_j=\bd G_j$. We can then apply a theorem in \cite{CS1} and obtain the uniform bound, valid for all $j$:
\begin{equation}\label{lambdaj}
\lambda_1(\Omega_j,A)\geq \dfrac{2\pi^2}{9\Big(\abs{\bd F}+2\pi B(\Omega)\Big)^2}\dfrac{\beta(\Omega)^4}{B(\Omega)^4}\cdot\gamma^2.
\end{equation}
As the bound holds for all subdomains of a disjoint partition it holds a fortiori for $\Omega$, thanks to Proposition \ref{simple1}.

\subsection{The partition of $\Omega$}

We start by giving  in Figure \ref{expartition} below the picture of the partition $\{\Omega_1,\Omega_2,\Omega_3\}$ when $\Omega$ has three holes. The inner boundary of each piece $\Omega_j$ is made of equidistant sets from two suitable holes.

\begin{figure}[H]
\begin{center}
 \includegraphics[width=13cm]{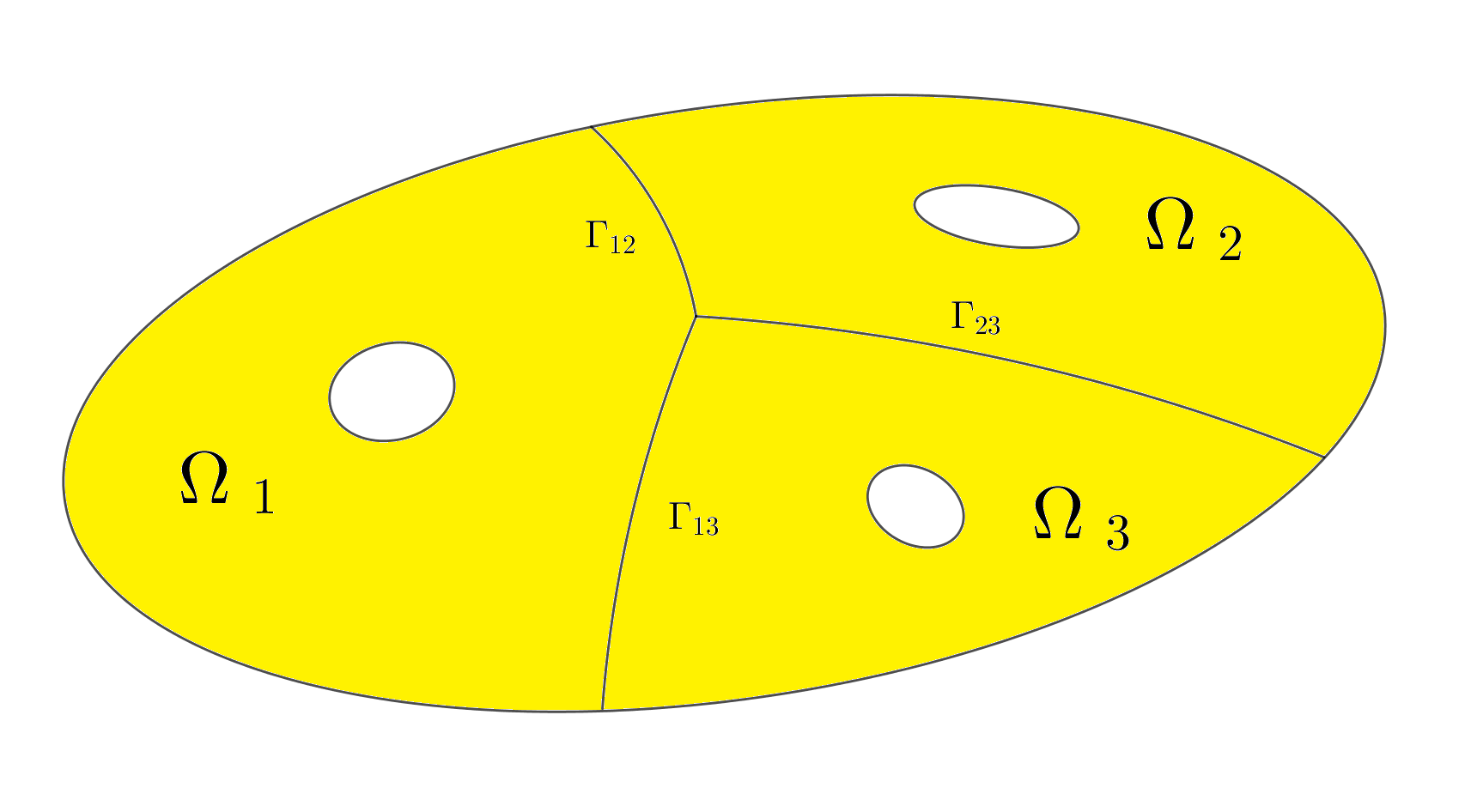}
\caption{\label{expartition}The partition $\{\Omega_1,\Omega_2,\Omega_3\}$ of a  domain $\Omega$ with three holes. The curves $\Gamma_{jk}$ are equidistant sets.}
\end{center}
\end{figure}

Here is the construction. For each $j=1,\dots,n$ we consider the non-empty open set:
$$
F_j=\{x\in F: d(x,G_j)<d(x,G_k)\quad\text{for all}\quad k\ne j\}.
$$
If we set:
\begin{equation}\label{hjk}
H_{jk}=\{x\in\real 2: d(x,G_j)<d(x,G_k)\}
\end{equation}
we see that we can write
$$
F_j=\cap_{k\ne j}(H_{jk}\cap F).
$$
It is clear that $\bar F=\cup_{j=1}^n \bar F_j$. We remark that $\bd H_{jk}$ is the equidistant set from $G_j$ and $G_k$:
\begin{equation}\label{equidistant}
\bd H_{jk}=\{x\in\real 2: d(x,G_j)=d(x,G_k)\}.
\end{equation}

We have the following general fact.

\begin{lemme} \label{decomposition1} Let $G_1$ and $G_2$ be disjoint smooth convex domains. Then the equidistant set $\bd H_{12}$ as above is a smooth curve.
\end{lemme}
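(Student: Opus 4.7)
\textbf{Proof plan for Lemma \ref{decomposition1}.} The plan is to realize $\bd H_{12}$ as the zero set of the smooth function
$$
f(x)=d(x,G_1)-d(x,G_2)
$$
and to apply the implicit function theorem, after verifying that $f$ is smooth and has nonvanishing gradient along $\{f=0\}$.

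\emph{Step 1: $\{f=0\}$ stays away from $\bar G_1\cup\bar G_2$.} The two closed convex domains are disjoint (being disjoint inner holes), so if $x\in\bar G_1$ then $d(x,G_1)=0<d(x,G_2)$, and symmetrically; hence $\{f=0\}\subset\real 2\setminus(\bar G_1\cup\bar G_2)$.

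\emph{Step 2: smoothness of the distance functions outside $\bar G_j$.} Because $G_j$ is smooth and convex, for every $x\notin\bar G_j$ there is a unique foot of perpendicular $p_j(x)\in\bd G_j$. Parametrizing $\bd G_j$ by arclength $\gamma_j(s)$ with outward unit normal $n_j(s)$, the foot is determined by the orthogonality condition $\scal{x-\gamma_j(s)}{\gamma_j'(s)}=0$. Differentiating in $s$ and using $\gamma_j''=-\kappa_j n_j$ with $\kappa_j\geq 0$ gives derivative $-1-d(x,G_j)\kappa_j(s)\leq -1$, so the implicit function theorem produces a smooth $s=s(x)$, hence a smooth foot $p_j(x)$ and a smooth distance $d(\cdot,G_j)$ on $\real 2\setminus\bar G_j$. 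Consequently $f$ is smooth on a neighborhood of $\{f=0\}$.

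\emph{Step 3: the gradient does not vanish on $\{f=0\}$.} A direct computation (or the envelope argument above) gives
$$
\nabla d(x,G_j)=\dfrac{x-p_j(x)}{d(x,G_j)},
$$
a unit vector pointing from the foot to $x$; call it $N_j(x)$. Thus $\nabla f=N_1-N_2$. Suppose $\nabla f(x)=0$ for some $x\in\{f=0\}$. Then $N_1(x)=N_2(x)$, and since $d(x,G_1)=d(x,G_2)=:r$ we get
$$
p_1(x)=x-rN_1(x)=x-rN_2(x)=p_2(x)\in\bar G_1\cap\bar G_2,
$$
contradicting the disjointness of the closures.

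\emph{Step 4: conclusion.} The implicit function theorem applied to the smooth submersion $f$ shows that $\{f=0\}=\bd H_{12}$ is a smooth one-dimensional submanifold of $\real 2$, i.e.\ a smooth curve.

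The only potentially delicate point is Step 2, the smoothness of $d(\cdot,G_j)$ outside $\bar G_j$; this is standard for smooth convex sets because convexity eliminates the exterior cut locus, ensuring uniqueness and smooth dependence of the foot $p_j(x)$. Everything else is a short computation with unit normals.
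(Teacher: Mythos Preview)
Your argument is correct and is essentially the same as the paper's: both realize $\bd H_{12}$ as the zero set of $f=d(\cdot,G_1)-d(\cdot,G_2)$, invoke smoothness of the distance to a convex body on its exterior, and rule out $\nabla f=0$ on $\{f=0\}$ by observing that equal unit gradients and equal distances force a common foot point in $\bar G_1\cap\bar G_2$. Your Step~2 spells out the smoothness of the foot map via the implicit function theorem, whereas the paper simply asserts this as a known consequence of convexity; otherwise the proofs coincide.
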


\begin{proof} Let $A=\real 2\setminus (G_1\cup G_2)$ and let $\rho_j$  be the distance function to $G_j$, $j=1,2$. The convexity of $G_j$ implies that $\rho_j$ is smooth on the complement of $G_j$, so that $\rho_j$ is smooth on $A$.  Let $f=\rho_1-\rho_2$, so that $\bd H_{12}$ is the zero set of $f$. One has $\nabla f=\nabla\rho_1-\nabla\rho_2$ and it is enough to show that  $\nabla f$ has no critical points on $\{f=0\}$. Let $p$ be a point in $\bd H_{12}=\bd H_{21}$ and let  $\gamma_1$ be the line segment which minimizes the distance from $p$ to $G_1$. One has: $\gamma_1(t)=p-t\nabla\rho_1(p)$. The corresponding minimizing segment from $p$ to $G_2$ is then $\gamma_2(t)=p-t\nabla\rho_2(p)$. If $\nabla\rho_1(p)=\nabla\rho_2(p)$ then $\gamma_1(t)=\gamma_2(t)$ and, as $d(p,G_1)=d(p,G_2)$, the two minimizing segments would have the same foot $q$, which would then belong to both $G_1$ and $G_2$: but this impossible because $G_1$ and $G_2$ are disjoint.

Hence, on $\{f=0\}$ one has $\nabla f\ne 0$ which proves smoothness. 
\end{proof}

As $G_1,\dots,G_k$ are disjoint we see that $G_j\subset F_j$ and then we can introduce the annulus 
$$
\Omega_j\doteq F_j\setminus \bar G_j,
$$
that is:
$$
\Omega_j=\{x\in \Omega: d(x,G_j)<d(x, G_k)\quad\text{for all}\quad k\ne j\}.
$$
The family $\{\Omega_1,\dots,\Omega_n\}$ gives rise to a disjoint partition of $\Omega$, as the next lemma shows. 

\begin{lemme}\label{partition} The following properties hold:

\parte a $\bar\Omega=\cup_{j=1}^n\bar\Omega_j$.

\parte b For $j\ne k$ one has that $\Omega_j\cap\Omega_k=\emptyset$ and $\bar\Omega_j\cap\bar\Omega_k$ is a smooth curve (eventually empty).

\parte c $\Omega_j$ is an annulus with smooth inner boundary $G_j$ and piecewise smooth outer boundary $\bd F_j$. Moreover:
$$
\bd F_j=\Big(\cup_{k\ne j}\Gamma_{jk}\Big)\cup (\bd F\cap\bar F_j),
$$
where
$
\Gamma_{jk}=\bar F_j\cap \bar F_k=\bar\Omega_j\cap\bar\Omega_k
$
is contained in the equidistant curve $\bd H_{jk}$ from $G_j$ and $G_k$.
\end{lemme}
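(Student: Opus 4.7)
The plan is to verify the three claims directly from the definition
$$
\Omega_j=\{x\in\Omega : d(x,G_j)<d(x,G_k)\text{ for all }k\ne j\}.
$$

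\textbf{For (a).} Every $x\in\bar\Omega$ lies in some $\bar\Omega_j$: pick an index $j$ that minimises $d(x,G_j)$, so that $d(x,G_j)\leq d(x,G_k)$ for all $k$, which places $x$ in $\bar F_j$ and hence in $\bar\Omega_j$. The reverse inclusion is trivial since each $\bar\Omega_j\subset\bar\Omega$.

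\textbf{For (b).} Disjointness $\Omega_j\cap\Omega_k=\emptyset$ for $j\ne k$ is immediate from the strict inequalities in the definition, since $d(x,G_j)<d(x,G_k)$ and $d(x,G_k)<d(x,G_j)$ cannot both hold. Points of $\bar\Omega_j\cap\bar\Omega_k$ lie in the equidistant locus $\bd H_{jk}$ of \eqref{equidistant}, which by Lemma~\ref{decomposition1} is a smooth curve; the intersection is the closed subset of $\bd H_{jk}\cap\bar\Omega$ cut out by the inequalities $d(x,G_j)\leq d(x,G_m)$ for $m\notin\{j,k\}$, hence a (possibly empty) union of smooth arcs of $\bd H_{jk}$.

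\textbf{For (c).} The heart of the argument is to show that $F_j$ is \emph{star-shaped with respect to $G_j$}. Let $x\in F_j$ and let $q\in\bd G_j$ be a foot of $x$ on $G_j$; convexity of $G_j$ ensures that, for any $y$ on the segment $\overline{xq}$, the point $q$ is still a foot of $y$ on $G_j$, hence $d(y,G_j)=d(x,G_j)-d(x,y)$. Meanwhile the triangle inequality gives $d(y,G_k)\geq d(x,G_k)-d(x,y)$ for each $k\ne j$. Subtracting,
$$
d(y,G_k)-d(y,G_j)\geq d(x,G_k)-d(x,G_j)>0,
$$
so $y\in F_j$. In particular $F_j$ is connected, $G_j\subset F_j$ (take $x\in G_j$), and $\Omega_j=F_j\setminus\bar G_j$ is an annulus with smooth inner boundary $\bd G_j$. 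The description of $\bd F_j$ now follows by inspecting the formula $F_j=\cap_{k\ne j}(H_{jk}\cap F)$: a boundary point of $F_j$ either lies on $\bd F$, contributing to $\bd F\cap\bar F_j$, or lies on $\bd H_{jk}$ for at least one $k\ne j$, in which case it belongs to $\bar F_j\cap\bar F_k=\Gamma_{jk}$. Piecewise smoothness of $\bd F_j$ follows because each $\Gamma_{jk}$ is contained in the smooth curve $\bd H_{jk}$ and $\bd F$ is smooth.

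The main obstacle is the star-shapedness claim, which is the only step that genuinely uses the convexity of the $G_j$ (through the fact that a foot on $G_j$ is a foot for every intermediate point on the minimising segment); everything else is bookkeeping with the definitions of $F_j$, $\Omega_j$, and the sets $H_{jk}$.
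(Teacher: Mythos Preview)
Your proof is correct and considerably more detailed than the paper's own, which consists of the single sentence ``The proof of the lemma is clear from the definitions.'' Your approach is entirely consistent with what the authors have in mind.

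One remark worth making: the star-shapedness argument you give in part (c) --- that the minimising segment from any $x\in F_j$ to $\bd G_j$ stays in $F_j$, via the inequality $d(y,G_k)-d(y,G_j)\geq d(x,G_k)-d(x,G_j)>0$ --- is precisely the argument the paper gives later, in the proof of Lemma~\ref{mj}a, to establish that $\Omega_j$ is starlike with respect to $\Gamma_j$. So you have anticipated a step the authors postpone. A minor omission: to conclude $y\in F_j$ you also need $y\in F$, which follows because $x\in F$, $q\in G_j\subset F$, and $F$ is convex; and from star-shapedness with respect to the convex set $G_j$ you should note explicitly that $F_j$ is simply connected (not merely connected) before asserting $\Omega_j$ is an annulus.
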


Note that actually $\bar\Omega_j\cap\bar\Omega_k=\bar F_j\cap \bar F_k$. The proof of the lemma is clear from the definitions. 

\subsection{Estimate of $\lambda_1(\Omega_j,A)$}

As the partition of Lemma \ref{partition} is disjoint, from Proposition \ref{simple1} we have:
$$
\lambda_1(\Omega,A)=\min_{j=1,\dots,n}\lambda_1(\Omega_j,A).
$$
Therefore, in this section, we estimate the first eigenvalue of the generic member of the partition. 
To that end, recall a relevant theorem from \cite{CS1}. Let $\Omega_1=F_1\setminus G_1$ be an annulus with inner boundary curve $\Gamma_1=\bd G_1$, where $G_1$ is smooth and convex. For $x\in\Gamma_1$ and $t\geq 0$, consider the segment $\gamma(t)=x+tN(x)$ where $N(x)$ is the unit normal to $\Gamma_1$ oriented outside $G_1$. Let $Q(x)$ be the first intersection of $\gamma_x(t)$ with the outer boundary curve $\bd F_1$, and let $\theta_x$ be the angle between $\gamma'_x$ and the outer normal $\nu$ to $F_1$ at $Q(x)$. We set:
$$
m(\Omega_1)\doteq\min_{x\in\Gamma_1}\cos\theta_x.
$$
We recall that $\Omega_1$ is said to be {\it starlike with respect to $\Gamma_1$} 
if, for any $y\in F_1$, the segment minimizing distance from $y$ to $\Gamma_1$ is entirely contained in $\Omega_1$. 

\smallskip

We also set:
$$
\twosystem{\beta(\Omega_1)\doteq\min\{d(x,Q(x): x\in\Gamma_1\}}
{B(\Omega_1)\doteq\max\{d(x,Q(x): x\in\Gamma_1\}}
$$
which are called, respectively, the minimum and maximum width of $\Omega_1$.

The estimate in Theorem 2 of \cite{CS1} says that:
\begin{equation}\label{cs}
\lambda_1(\Omega_1,A)\geq\dfrac{4\pi^2}{\abs{\bd F_1}^2}\dfrac{\beta(\Omega_1) m(\Omega_1)}{B(\Omega_1)}d(\Phi^A,{\bf Z})^2.
\end{equation}
We will apply \eqref{cs} to each annulus $\Omega_j$  in the above partition of $\Omega$. We start from:

\begin{lemme}\label{mj} Let $\Omega=F\setminus (G_1\cup\dots\cup G_n)$ and let $\Omega_j=F_j\setminus G_j$ be a piece in the partition defined above. Then:

a) $\Omega_j$ is an annulus which is starlike with respect to $\bd G_j$, and moreover:
$$
m(\Omega_j)\geq\dfrac{\beta(\Omega_j)}{2B(\Omega_j)}
$$

b) One has the estimate:
$$
\abs{\bd F_j}\leq \dfrac{2B(\Omega_j)}{\beta(\Omega_j)}\Big(\abs{\bd G_j}+2\pi B(\Omega_j)\Big)
$$
\end{lemme}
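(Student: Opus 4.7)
The plan for part (a) is to prove the two assertions separately. Starlikeness of $\Omega_j$ with respect to $\bd G_j$ I would establish directly from the definition of $F_j$: for $y\in\Omega_j$, the segment from $y$ to its closest point $y_j\in G_j$ is a straight line (by convexity of $G_j$), along which $d(\cdot,G_j)$ decreases with unit speed while $d(\cdot,G_k)$ decreases at rate at most $1$ (triangle inequality). Since $d(y,G_j)<d(y,G_k)$ holds initially, this strict inequality persists along the segment, which therefore stays inside $F_j$; convexity of $F$ keeps it inside $F$.

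For the bound $m(\Omega_j)\geq\beta(\Omega_j)/(2B(\Omega_j))$, I would fix $x\in\Gamma_j$ and write $Q=Q(x)=x+t_xN(x)\in\bd F_j$ with $\beta(\Omega_j)\leq t_x\leq B(\Omega_j)$, then split according to whether $Q$ lies on some equidistant piece $\bd H_{jk}$ or on $\bd F$. In the first case the outer normal to $F_j$ at $Q$ is parallel to $\nabla(\rho_j-\rho_k)(Q)=N(x)-v$, where $v=(Q-y_k)/t_x$ and $y_k$ is the closest point of $G_k$ to $Q$; a short vector computation yields $\cos\theta_x=\sin(\alpha/2)$ and $|y_k-x|=2t_x\sin(\alpha/2)$, with $\alpha$ the angle between $N(x)$ and $v$. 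The estimate then follows from $|y_k-x|\geq d(G_j,G_k)\geq 2\beta(\Omega_j)$, the last inequality because the midpoint of the shortest $G_j$-to-$G_k$ segment lies on an equidistant and is reached from $\bd G_j$ by a normal ray of length $d(G_j,G_k)/2$, so $\beta(\Omega_j)\leq d(G_j,G_k)/2$.

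In the second case ($Q\in\bd F$) I would introduce the tangent line $\ell$ to $\bd F$ at $Q$; since $F$ sits on the inner side of $\ell$, trigonometry gives $d(x,\ell)=t_x\cos\theta_x$, so the task reduces to proving $d(G_j,\ell)\geq\beta(\Omega_j)$. For this I would pivot to the point $y^*\in\bd G_j$ whose outward unit normal equals $\nu$: by convexity, $y^*$ realizes the distance from $G_j$ to $\ell$, and the ray $y^*+t\nu$ exits $F_j$ while still inside $\bar F$, hence on the inner side of $\ell$, so $t_{y^*}\leq d(y^*,\ell)$. Therefore $d(G_j,\ell)=d(y^*,\ell)\geq t_{y^*}\geq\beta(\Omega_j)$, giving the bound.

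Part (b) is a length computation that falls out of (a). Parameterize $\Gamma_j$ by arclength $s\mapsto x(s)$, so that starlikeness makes $Q(s)=x(s)+t(s)N(s)$ a bijective parameterization of $\bd F_j$; differentiating and using $N'(s)=\kappa(s)T(s)$ gives $|Q'(s)|=(1+t(s)\kappa(s))/\cos\theta_{x(s)}$. Integrating, and invoking $t\leq B(\Omega_j)$, $\cos\theta_x\geq m(\Omega_j)\geq\beta(\Omega_j)/(2B(\Omega_j))$, and the total-curvature identity $\int_{\Gamma_j}\kappa\,ds=2\pi$ for the convex curve $\Gamma_j$, produces
\[
|\bd F_j|\leq\frac{|\bd G_j|+2\pi B(\Omega_j)}{m(\Omega_j)}\leq\frac{2B(\Omega_j)}{\beta(\Omega_j)}\bigl(|\bd G_j|+2\pi B(\Omega_j)\bigr).
\]
The main obstacle is the second case of the $m(\Omega_j)$ estimate: the ``pivot to $y^*$'' is what converts the delicate angular question at an arbitrary $x$ into a routine ray-length comparison, leveraging only the definition of $\beta(\Omega_j)$ and convexity of $F$.
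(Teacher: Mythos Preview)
Your argument is correct, and in several places it takes a genuinely different route from the paper.

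For part (a), the starlikeness argument is essentially the same idea as the paper's (the paper phrases it as a contradiction via the triangle inequality, you as monotonicity of $d(\cdot,G_j)-d(\cdot,G_k)$ along the minimizing segment). In the equidistant case for $m(\Omega_j)$, both you and the paper reduce to the isoceles triangle $x,y_k,Q$ and obtain $\cos\theta_x=\abs{x-y_k}/(2t_x)$; the paper then simply asserts $\abs{x-y_k}\geq\beta(\Omega_j)$ ``by definition of $\beta(\Omega_j)$'', whereas your midpoint argument actually gives the sharper $\abs{x-y_k}\geq d(G_j,G_k)\geq 2\beta(\Omega_j)$, which is a cleaner justification. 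In the $\partial F$ case the paper defers entirely to \cite{CS1}, while your pivot to the point $y^\star\in\Gamma_j$ with outward normal $\nu$ is a neat self-contained substitute: it reduces the angular estimate at an arbitrary $x$ to the obvious fact that $Q(y^\star)\in\bar F$ lies on the inner side of the tangent line, so $t_{y^\star}\leq d(y^\star,\ell)=d(G_j,\ell)\leq d(x,\ell)=t_x\cos\theta_x$.

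For part (b) the two proofs are quite different. The paper applies Green's formula to $\rho_1=d(\cdot,G_j)$ on $\Omega_j$, using that $\Delta\rho_1\leq 0$ and that the total curvature of each equidistant is $2\pi$ to bound $\int_{\Omega_j}\Delta\rho_1\geq -2\pi B$; the boundary term then yields $\abs{\partial F_j}$ against $\cos\theta_x$. Your approach instead exploits starlikeness to parameterize $\partial F_j$ bijectively by the normal rays from $\Gamma_j$, computes $\abs{Q'(s)}=(1+t\kappa)/\cos\theta_x$ directly, and integrates. Your method is more elementary (no integration by parts) and makes the role of the total curvature $\int_{\Gamma_j}\kappa=2\pi$ transparent; the paper's method is perhaps more robust in that it does not require checking that $s\mapsto Q(s)$ is a piecewise-smooth bijection. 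Both lead to the same inequality.
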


Lemma \ref{mj} allows to prove Theorem \ref{nholes} as follows. We apply \eqref{cs} to $\Omega_j$ and get:
$$
\lambda_1(\Omega_j,A)\geq \dfrac{\pi^2}{2\Big(\abs{\bd G_j}+2\pi B(\Omega)\Big)^2}\dfrac{\beta(\Omega_j)^4}{B(\Omega_j)^4}\cdot\gamma^2.
$$
To make the lower bound independent on $j$, it is enough to observe that
$\beta(\Omega_j)\geq\beta(\Omega), B(\Omega_j)\leq B(\Omega)$ and $\abs{\bd G_j}\leq \abs{\bd F}$. Then we get:
$$
\lambda_1(\Omega_j,A)\geq \dfrac{\pi^2}{2\Big(\abs{\bd F}+2\pi B(\Omega)\Big)^2}\dfrac{\beta(\Omega)^4}{B(\Omega)^4}\cdot\gamma^2.
$$
which is the final step of the proof. 

\medskip

Then, it remains to prove Lemma \ref{mj}.


\subsection{Proof of Lemma \ref{mj}a}

It is enough to prove it for $j=1$.  We first prove that $\Omega_1=F_1\setminus \bar G_1$ is star shaped with respect to $\Gamma_1=\bd G_1$. Let $y\in\bd F_1$ and let $\sigma$ be the segment starting at $y$ and minimizing distance to $\Gamma_1$: let $x\in\Gamma_1$ be the foot of $\sigma$.

\nero Note that, as $y\in\bd F_1$, we must have $d(y,\Gamma_1)\leq d(y,\Gamma_k)$ for all $k\ne 1$.  

\smallskip

We prove that $\sigma$ is entirely contained in $\Omega_1$. Assume by contradiction that there is $z\in\sigma$ such that $z\notin\Omega_1$. Then $z\in\Omega_h$ for some $h\ne 1$, and there exists $q\in\Gamma_h$ with $d(z,q)<d(z,x)$. But then:
$$
\begin{aligned}
d(y,x)&=d(y,z)+d(z,x)\\
&>d(y,z)+d(z,q)\\
&\geq d(y,q)
\end{aligned}
$$
that is, $d(y,x)>d(y,q)$ and this means that $d(y,\Gamma_1)>d(y,\Gamma_h)$,  which contradicts the assumption. Hence $\Omega_1$ is star shaped. 

\smallskip

We now estimate $\cos\theta_x$, and for convenience we refer to the picture below, Figure \ref{cutlocus}.

\begin{figure}[H]
\begin{center}
 \includegraphics[width=13cm]{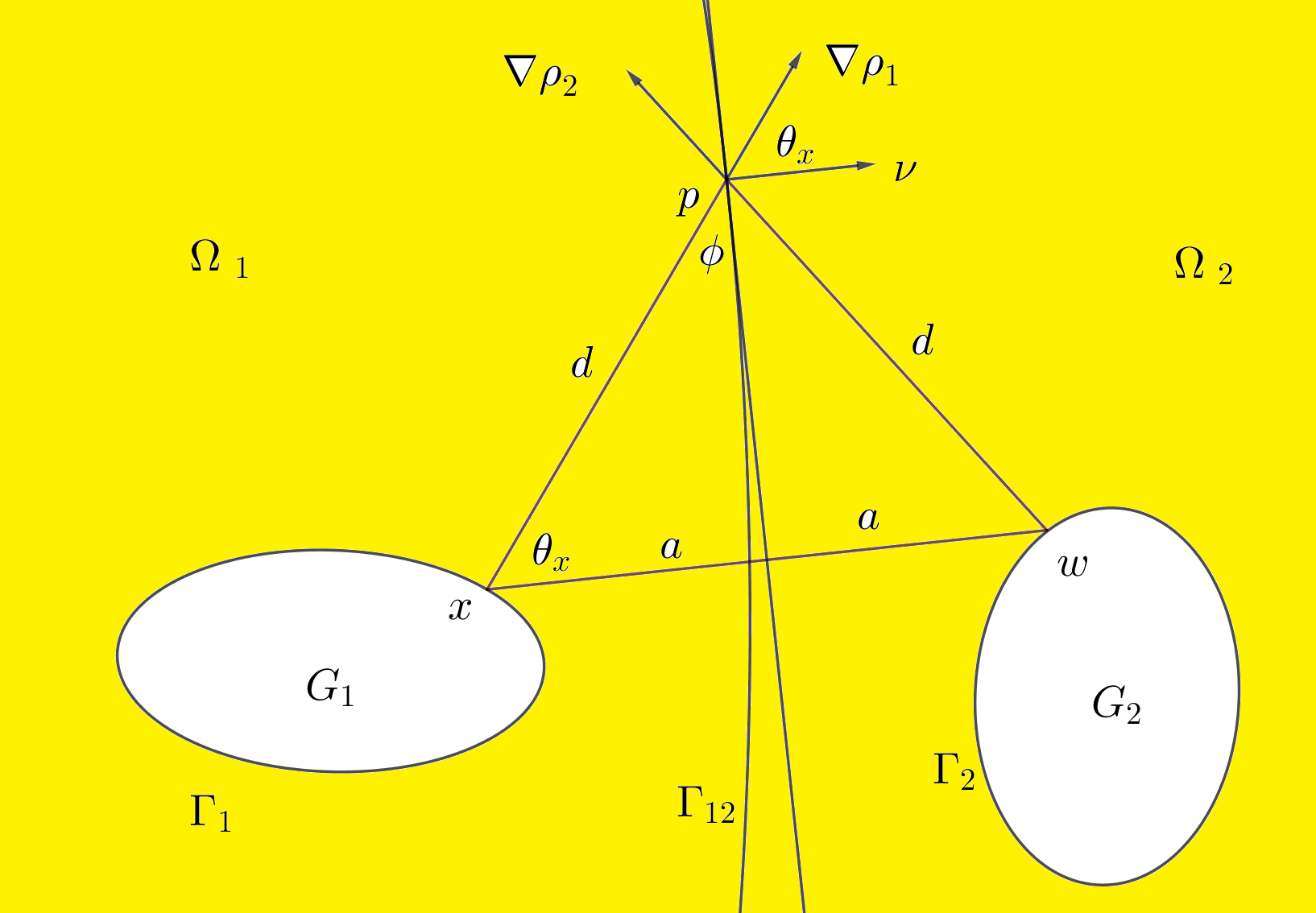}
\caption{\label{cutlocus}The curve  is the equidistant $\Gamma_{12}$ from $\Gamma_1$ and $\Gamma_2$. The tangent to $\Gamma_{12}$ at $p=Q(x)$ is the line through $p$ orthogonal to $\nu$. It cuts the angle between $\nabla\rho_1$ and $\nabla\rho_2$ in half.}
\end{center}
\end{figure}

Let $x\in\Gamma_1$ and draw the segment $\gamma_x(t)=x+tN_x$ where $N_x$ is the unit normal vector to $\Gamma_1$ pointing outside $G_1$. It hits $\bd F_1$ at the point $p=Q(x)$. If $p\in \bd F$ we proceed as in \cite{CS1} (because $\bd F$ is convex) and get
$$
\cos\theta_x\geq\dfrac{\beta(\Omega_1)}{B(\Omega_1)}.
$$
If $p\notin \bd F$ (as in the picture) then $p\in\Gamma_{1k}$ for some $k\ne 1$; we can assume that $k=2$.  Let $w$ be the point in $\Gamma_2$ such that 
$d(p,x)=d(p,w)=d$; observe that $\cos\theta_x=\scal{\nabla\rho_1}{\nu}$ where $\nu$ is the  normal to $\Gamma_{12}$ at $p$ pointing away from $\Gamma_1$. Observe that $\nu$ is the unit vector in the direction of $\nabla\rho_1-\nabla\rho_2$, and that $\nabla\rho_1+\nabla\rho_2$ is tangent to $\Gamma_{12}$ at $p$. If $2\phi$ is the angle between $\nabla\rho_1$ and $\nabla\rho_2$ then we see that $2\phi+2\theta_x=\pi$, that is
$$
\theta_x=\frac{\pi}2-\phi.
$$
Consider the triangle with vertices $x,w,p$; it is isosceles on the basis $xw$, (whose length is denoted $2a$); its height is part of the tangent line to the equidistant at $p$.  One sees that
$$
d\cos\theta_x=d\sin\phi=a
$$
hence
$$
\cos\theta_x=\frac{a}{d}.
$$
Now $2a=d(x,w)\geq \beta(\Omega_1)$ by definition of $\beta(\Omega_1)$; as the segment joining $x$ and $p$ is entirely contained in $\Omega_1$ we see that $d\leq B(\Omega_1)$. Hence
$$
\cos\theta_x\geq \frac{\beta(\Omega_1)}{2B(\Omega_1)}
$$
as asserted. 


\subsection{Proof of Lemma \ref{mj}b}

Recall that the typical piece of the decomposition is $\Omega_j=F_j\setminus G_j$. 
We need to estimate $\abs{\bd F_j}$; this is a bit more difficult now because $F_j$ is no longer convex (there are circumstances under which each $F_j$ is actually convex - for example, when all holes are disks of the same radius - and we will discuss this case in the next section, to obtain a simpler final estimate). 

Set $j=1$ for concreteness. 
We apply Green formula to the function $\rho_1(x)=d(x,\bd G_1)$. Note that $\Delta\rho_1(x)$ is the curvature at $x$ of the equidistant to $\bd G_1$ through $x$; as $\bd G_1$ is convex one has $\Delta\rho_1\leq 0$ on the complement of $G_1$.  By Green formula:
$$
\int_{\Omega_1}\Delta\rho_1=\int_{\bd\Omega_1}\scal{\nabla\rho_1}{N},
$$
where $N$ is the inner unit normal. We let
$D(G_1,B)$ denote the $B$-neighborhood of $G_1$, so that $F_1\subseteq D(G_1,B)$ by the definition of $B$. Since $\Delta\rho_1\leq 0$:
$$
\int_{\Omega_1}\Delta\rho_1\geq\int_{D(G_1,B)}\Delta\rho_1.
$$
By co-area formula:
$$
\int_{D(G_1,B)}\Delta\rho_1=\int_0^B\int_{\rho_1=r}\Delta\rho_1\,dr=-2\pi B
$$
because $\int_{\rho_1=r}\Delta\rho_1=-2\pi$ for all $r$ (we are integrating the opposite of the curvature of a closed curve, and we always obtain $-2\pi$). Therefore:
\begin{equation}\label{deltarho}
\int_{\Omega_1}\Delta\rho_1\geq -2\pi B.
\end{equation}

On the other hand $\bd\Omega_1=\bd G_1\cup\bd F_1$. Hence:
$$
\int_{\bd\Omega_1}\scal{\nabla\rho_1}{N}=\int_{\bd G_1}\scal{\nabla\rho_1}{N}+
\int_{\bd F_1}\scal{\nabla\rho_1}{N}.
$$
The first piece is $\abs{\bd G_1}$. On the outer boundary $\bd F_1$ we see that:
$$
\scal{\nabla\rho_1}{N}=-\cos\theta_x\leq -\dfrac{\beta}{2B},
$$
where $\theta_x$ is as in the proof of part a), and the inequality then follows from part a). Then:
$$
\int_{\Omega_1}\scal{\nabla\rho_1}{N}\leq \abs{\bd G_1}-\frac{\beta}{2B}\abs{\bd F_1},
$$
and given \eqref{deltarho} we obtain $-2\pi B\leq \abs{\bd G_1}-\frac{\beta}{2B}\abs{\bd F_1}$, that is:
$$
\abs{\bd F_1}\leq\frac{2B}{\beta}(\abs{\bd G_1}+2\pi B),
$$
which gives the assertion. 


\section{Proof of Theorem \ref{punctured}} About the partition $\{\Omega_1,\dots,\Omega_n\}$ of the previous section for domains with $n$ holes, we remark that if the inner holes $G_1,\dots,G_n$ are disks of the same radius $r$, then the equidistant set between any pair of them is simply a straight line, and therefore each $\Gamma_{jk}=\bar\Omega_j\cap\bar\Omega_k$ is a line segment; moreover the subdomains $F_1,\dots,F_n$ are all convex: see Figure \ref{puncture} which illustrates the partition when all holes shrink to a point.

\begin{figure}[H]
\begin{center}
 \includegraphics[width=8cm]{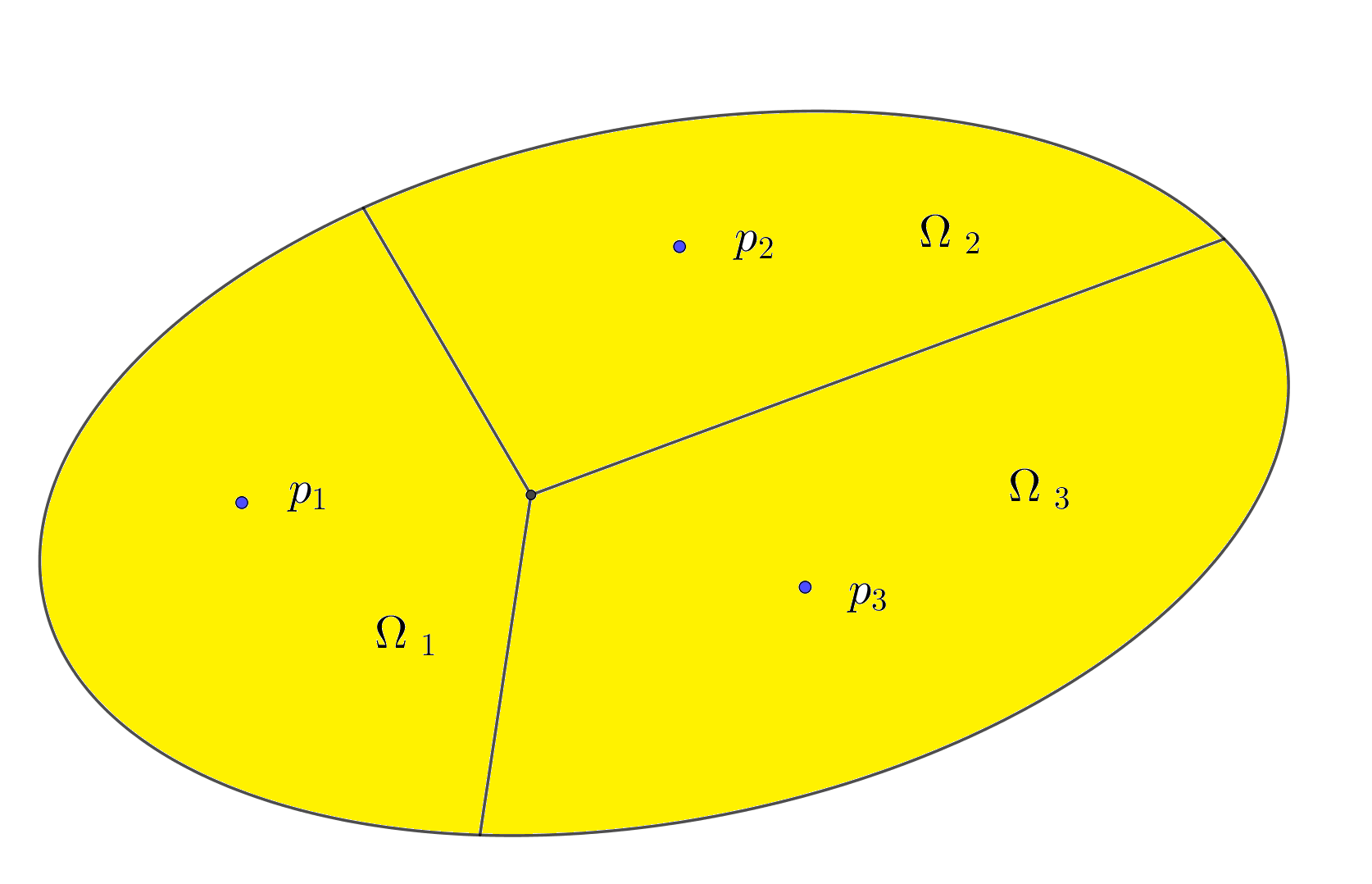}
\caption{\label{puncture}The partition $\{\Omega_1,\Omega_2,\Omega_3\}$ for a domain $\Omega$ punctured at the points $p_1,p_2,p_3$.}
\end{center}
\end{figure}

We can directly apply \eqref{JFA} to each $\Omega_j$ and obtain:
$$
\lambda_1(\Omega_j,A)\geq \dfrac{4\pi^2}{\abs{\bd F_j}^2}\dfrac{\beta(\Omega_j)^2}{B(\Omega_j)^2}\cdot d(\Phi_j,{\bf Z})^2
$$
As $F_j$ is convex, we have $\abs{\bd F_j}\subseteq\abs{\bd F}$ and therefore we arrive at the following estimate.

\begin{theorem}\label{smalldisks} Let $\Omega=F\setminus (G_1\cup\dots\cup G_n)$ with $F$ convex and $G_1, \dots, G_n$ being disjoint disks of center, respectively, $p_1,\dots,p_n$ and common radius $r>0$.  Then:
$$
\lambda_1(\Omega,A)\geq \dfrac{4\pi^2}{\abs{\bd F}^2}\dfrac{\beta(\Omega)^2}{B(\Omega)^2}\cdot \gamma^2
$$
with $\gamma=\min_{j=1,\dots,n}d(\Phi_j,{\bf Z})$. 
\end{theorem}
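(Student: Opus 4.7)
The plan is to reduce directly to the doubly connected case by using the partition $\{\Omega_1,\dots,\Omega_n\}$ from Section 3 and exploiting the special geometry of disks of the same radius to obtain convex outer pieces.

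The first step is the key geometric observation. For disks $G_j, G_k$ of common radius $r$ centered at $p_j, p_k$, one has $d(x,G_m)=|x-p_m|-r$ whenever $x\notin G_m$, so the equidistant set $\bd H_{jk}$ from \eqref{equidistant} is exactly the perpendicular bisector of the segment $\overline{p_jp_k}$. Therefore $H_{jk}$ is a genuine open half-plane, and
$$
F_j=F\cap\bigcap_{k\ne j}H_{jk}
$$
is convex, being the intersection of the convex domain $F$ with finitely many half-planes. Each piece $\Omega_j=F_j\setminus\bar G_j$ is then a doubly connected domain bounded by two convex curves: a smooth disk $\bd G_j$ inside and a convex piecewise-smooth curve $\bd F_j$ outside (smooth arcs of $\bd F$ joined to straight segments of the bisectors).

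The second step is to apply the doubly connected estimate. Theorem \ref{first} applied to $\Omega_j$ (the piecewise-smooth extension of \eqref{JFA}) gives
$$
\lambda_1(\Omega_j,A)\ge \frac{4\pi^2}{\abs{\bd F_j}^2}\frac{\beta(\Omega_j)^2}{B(\Omega_j)^2}\,d(\Phi_j,{\bf Z})^2,
$$
where $\Phi_j$ is the flux of $A$ around $\bd G_j$. Since the partition is disjoint (Lemma \ref{partition}b), Proposition \ref{simple1}b gives $\lambda_1(\Omega,A)\ge\min_j\lambda_1(\Omega_j,A)$. Finally, from $F_j\subseteq F$ with both convex we get the clean perimeter bound $\abs{\bd F_j}\le\abs{\bd F}$; combining this with the width comparisons $\beta(\Omega_j)\ge\beta(\Omega)$ and $B(\Omega_j)\le B(\Omega)$ used in the proof of Theorem \ref{nholes}, and with $d(\Phi_j,{\bf Z})\ge\gamma$, produces the announced estimate.

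What separates this result from the general Theorem \ref{nholes} is precisely that here we avoid the lossy perimeter bound of Lemma \ref{mj}b, which introduced the extra factor $\frac{2B}{\beta}$. Obtaining $\abs{\bd F_j}\le\abs{\bd F}$ directly from convexity is what upgrades the final exponent from $\beta^4/B^4$ to $\beta^2/B^2$, so the real content of the theorem is the convexity of $F_j$; everything else is bookkeeping inherited from the previous sections. The main technical obstacle is therefore essentially cosmetic: one just has to recognize that the equal-radius hypothesis linearizes the equidistant curves.
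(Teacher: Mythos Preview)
Your proof is correct and follows essentially the same route as the paper: the observation that equal-radius disks make the equidistant sets straight lines, hence each $F_j$ convex, followed by the doubly connected bound on each piece and the perimeter inequality $\abs{\bd F_j}\le\abs{\bd F}$. If anything you are slightly more careful than the paper, explicitly invoking Theorem~\ref{first} (the piecewise-smooth version of \eqref{JFA}) and Proposition~\ref{simple1}b, and spelling out the width comparisons $\beta(\Omega_j)\ge\beta(\Omega)$, $B(\Omega_j)\le B(\Omega)$ that the paper leaves implicit.
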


We remark that if we let $r\to 0$ in Theorem \ref{smalldisks} we get the lower bound for the punctured domain $\Omega\setminus\{p_1,\dots,p_n\}$ as in Theorem \ref{punctured}.

\addcontentsline{toc}{chapter}{Bibliography}
\bibliographystyle{plain}
\bibliography{biblioCSdomains1}

\bigskip

\normalsize 
\noindent Bruno Colbois \\
Universit\'e de Neuch\^atel, Institut de Math\'ematiques \\
Rue Emile Argand 11\\
 CH-2000, Neuch\^atel, Suisse

\noindent bruno.colbois@unine.ch

\medskip

\normalsize
\noindent
Alessandro Savo \\
 Dipartimento SBAI, Sezione di Matematica,
Sapienza Universit\`a di Roma\\
Via Antonio Scarpa 16\\
00161 Roma, Italy

\noindent alessandro.savo@uniroma1.it

\end{document}